\documentclass{amsart}
\usepackage{graphicx}
\usepackage{amssymb}
\vfuzz2pt 
\hfuzz2pt 
\newtheorem{thm}{Theorem}[section]
\newtheorem{cor}[thm]{Corollary}
\newtheorem{lem}[thm]{Lemma}
\newtheorem{prop}[thm]{Proposition}
\theoremstyle{definition}
\newtheorem{defn}[thm]{Definition}
\newtheorem{rem}[thm]{Remark}
\theoremstyle{question}

\theoremstyle{Conjecture}
\newtheorem{con}[thm]{Conjecture}
\numberwithin{equation}{section}

\begin{document}

\title[Nonnilpotent subsets in the Suzuki groups]{Nonnilpotent subsets in the Suzuki groups}%
\author{Mohammad Zarrin}%

\address{Department of Mathematics, University of Kurdistan,
Sanandag
 66177-15175, Iran}%
 \email{M.Zarrin@uok.ac.ir, Zarrin@ipm.ir}
\begin{abstract}
Let $G$ be a group and $\mathcal{N}$ be the class of nilpotent
groups. A subset $A$ of $G$ is said to be nonnilpotent if for any
two distinct elements $a$ and $b$ in $A$, $\langle a, b\rangle
\not\in \mathcal{N}$. If, for any other nonnilpotent subset $B$ in
$G$, $|A|\geq |B|$, then $A$ is said to be a maximal nonnilpotent
subset and the cardinality of this subset (if it exists) is
denoted by $\omega(\mathcal{N}_G)$. In this paper, among other
results, we obtain $\omega(\mathcal{N}_{Suz(q)})$ and
$\omega(\mathcal{N}_{PGL(2,q)})$, where $Suz(q)$ is the Suzuki
simple group over the field with $q$ elements and $PGL(2,q)$ is
the projective general linear group of degree $2$ over the finite
field of size $q$, respectively. \\\\
{\bf Keywords}.
nilpotentlizer, hypercenter of a group, clique
number, graphs associated to groups.\\
{\bf Mathematics Subject Classification (2000)}. 20D60.
 \end{abstract}
\maketitle

\section{\textbf{Introduction and results}}

   One can associate a graph to a group in many different ways
 (see for example \cite{Ab1, Ab3, z, Bal}). Let $G$ be a
group. Following \cite{z}, we shall use the notation
$\mathcal{N}_G$ to denote the
 nonnilpotent graph, as follows:
 take $G$ as the vertex set  and   two  vertices are adjacent
  if they  generate a nonnilpotent subgroup. Note that if $G$ is weakly nilpotent (i.e., every
two generated subgroup of $G$ is nilpotent), $\mathcal{N}_G$ has
no edge. It follows that the nonnilpotent graphs of weakly
nilpotent groups with the same cardinality are isomorphic. So we
must be interested in non weakly nilpotent groups. A set $C$ of
vertices of a graph $\Lambda$ whose induced subgraph is a complete
subgraph is called a clique and the maximum size (if it exists) of
a clique in a graph is called the clique number of the graph and
it is denoted by $\omega(\Lambda)$. A subset $A$ of $G$ is said to
be nonnilpotent if for any two distinct elements $a$ and $b$ in
$A$, $\langle a, b\rangle \not\in \mathcal{N}$ (we call two
elements $a, b$ nonnilpotent). If, for any other nonnilpotent
subset $B$ in $G$, $|A|\geq |B|$, then $A$ is said to be a maximal
nonnilpotent subset. Thus $\omega(\mathcal{N}_G)$ is simply the
cardinality of maximal nonnilpotent subset (or the maximum number
of pairwise nonnilpotent elements) in the group $G$. One of our
motivations for associating with a group such kind of graph is a
problem posed by Erd\"{o}s: For a group $G$, consider a graph
$\mathcal{A}_G$ whose vertex set is $G$ and join two distinct
elements if they do not commute. Then he asked: Is there a finite
bound for the cardinalities of cliques in $\mathcal{A}_G$, if
$\mathcal{A}_G$ has no infinite clique?\; Neumann \cite{Ne}
answered positively Erd\"{o}s' problem by proving that such groups
are exactly the center-by-finite groups and the index of the
center can be considered as the requested bound in the problem.

This results suggests that the clique number of a graph of a group
not only has some influence on the structure of a group but also
finding it, it is important and interesting. Recently Abdollahi
and Zarrin in \cite{z}, have studied the influence of
$\omega(\mathcal{N}_G)$ on the structure of a group. Then Azad in
\cite{Az}, obtained a lower bound for
$\omega(\mathcal{N}_{GL(n,q)})$ and he determined
$\omega(\mathcal{N}_{PSL(2,q)})$. Also the author in \cite{zar},
shortly prove, determined $\omega(\mathcal{N}_{PSL(2,q)})$ (see
 Proposition 4.2 of \cite{zar}). Clearly, $\mathcal{N}_G$ is a
subgraph of $\mathcal{A}_{G}$ and so
$$\omega(\mathcal{N}_{G})\leq \omega(\mathcal{A}_{G}).\eqno(\star)$$

A group $G$ is an $AC$-group if $C_G(g)$ is abelian
  for  all $g\in G\setminus Z(G)$, where $C_G(g)$
is the centralizer of the element $g$ in $G$.
\begin{rem}
 Let $G$ be a centerless $AC$-group. Then it follows from
Lemma 4.1 of \cite{zar}, that
$\omega(\mathcal{N}_{G})=\omega(\mathcal{A}_{G})$ (note that, as
$G$ is an $\mathrm{AC}$-group, we have either
 $C_G(a)\cap C_G(b)=Z(G)$ or $ab=ba$ for every $a, b\in G$).
\end{rem}

 In this paper we give some properties of $\mathcal{N}(n)$-groups, where by an $\mathcal{N}(n)$-group
  we mean a group $G$ which has exactly  $n~nilpotentizer$ (in fact, by using this class of groups we give
an upper bound for $\omega(\mathcal{N}_{G})$ in terms of $n$).
Also we determine $\omega(\mathcal{N}_{Suz(q)})$ and
$\omega(\mathcal{N}_{PGL(2,q)})$. Finally, by using these results,
we characterize all nonabelian finite semisimple groups $G$ with
$\omega(\mathcal{N}_{G})\leq 72$, in fact we will
generalize Theorem 4.5 of~\cite{Az}.\\

Our main results are:

\begin{thm}\label{t1}
Let $G = Suz(q)$ \rm($q = 2^{2m+1}$ and $m > 0$). Then
$$\omega{(\mathcal{N}_G)}= (q^2 + 1)+ \frac{q^2(q^2 + 1)}{2} +
\frac{q^2(q^2 + 1)(q - 1)}{4(q + 2r + 1)}+\frac{q^2(q^2 + 1)(q
-1)}{4(q-2r+1)},$$ where $r=\sqrt{\frac{q}{2}}$.
\end{thm}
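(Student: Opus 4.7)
The plan is to exploit Suzuki's classical determination of the subgroup structure of $Suz(q)$ to identify the maximal nilpotent subgroups of $G$, and to observe that they partition $Suz(q)\setminus\{1\}$. Four conjugacy classes of maximal nilpotent subgroups arise: (i) the Sylow $2$-subgroups $P$ of order $q^2$, of which there are $q^2+1$; (ii) the cyclic subgroups $T_0$ of order $q-1$, whose normalizer is dihedral of order $2(q-1)$; (iii) the cyclic subgroups $T_+$ of order $q+2r+1$, with $|N_G(T_+)|=4(q+2r+1)$; and (iv) the cyclic subgroups $T_-$ of order $q-2r+1$, with $|N_G(T_-)|=4(q-2r+1)$. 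Dividing $|G|=q^2(q^2+1)(q-1)$ by the respective normalizer orders gives the counts $\frac{q^2(q^2+1)}{2}$, $\frac{q^2(q^2+1)(q-1)}{4(q+2r+1)}$, and $\frac{q^2(q^2+1)(q-1)}{4(q-2r+1)}$ of conjugates in families (ii), (iii), and (iv).

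Next I would check that these four families jointly partition $Suz(q)\setminus\{1\}$. Each nontrivial element of $Suz(q)$ has order either a power of $2$ or an odd divisor of exactly one of $q-1$, $q+2r+1$, $q-2r+1$ (these three odd integers being pairwise coprime), and consequently lies in a unique subgroup of the corresponding family. Disjointness is confirmed by the cardinality identity
$$\sum_i(|H_i|-1)=|G|-1,$$
the sum running over all members of the four families. Each $H_i$ is nilpotent, being either a $2$-group or cyclic. Conversely, any nilpotent subgroup $N\le G$ lies in a single $H_i$: if $N$ had both nontrivial $2$-part $P_N$ and nontrivial odd part $Q_N$, then $Q_N$ would centralize $P_N$, contradicting the fact that in $Suz(q)$ the centralizer of any non-identity $2$-element is itself a $2$-group (since the Sylow $2$-subgroups are trivial-intersection). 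Thus $N$ is either a $2$-group, lying in a unique Sylow $2$-subgroup, or of odd order, in which case its Sylow subgroups each sit in a unique torus of one of the three odd-order families; as these tori are self-centralizing, nilpotency of $N$ forces all Sylow parts of $N$ into a single torus.

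It follows that for $a,b\in G\setminus\{1\}$, $\langle a,b\rangle\in\mathcal{N}$ if and only if $a$ and $b$ lie in the same member of the partition. A maximum nonnilpotent subset is therefore obtained by picking exactly one element from each partition member, and $\omega(\mathcal{N}_G)$ equals the total number of members, which is the sum claimed in the theorem.

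The principal obstacle is the structural claim that every nilpotent subgroup of $Suz(q)$ lies in a single $H_i$; this is ultimately a consequence of Suzuki's explicit centralizer computations (the TI-property of the Sylow $2$-subgroups and the self-centralizing property of each of the three cyclic tori). Once this input is granted, computing $\omega(\mathcal{N}_G)$ reduces to the orbit-count arithmetic described in the first paragraph.
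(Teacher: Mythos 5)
Your proposal is correct and follows essentially the same route as the paper: both arguments rest on the Huppert--Blackburn/Suzuki partition of $Suz(q)\setminus\{1\}$ by the conjugates of the Sylow $2$-subgroup and the three cyclic tori, establish that each nontrivial element's nilpotentizer is exactly its partition part (you do this directly via the TI/self-centralizing properties, the paper via its Lemma 3.1 on subgroups containing all centralizers of their nontrivial elements), and then count the parts. The only cosmetic differences are that you recover the conjugate counts from normalizer indices where the paper cites them, and that the paper packages the final step as a general lemma about partitions with $nil_H(g)\subseteq M_i$.
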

\begin{thm}\label{t2}
We have
\begin{enumerate}
\item If $q=4$ or $q>5$, then
$\omega(\mathcal{N}_{PGL(2,q)})=\omega(\mathcal{N}_{GL(2,q)})=
    q^2+q+1.$
\item  $1015\leq
\omega(\mathcal{N}_{PSL(3,3)})=\omega(\mathcal{N}_{SL(3,3)}).$
\end{enumerate}
\end{thm}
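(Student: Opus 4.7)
The plan is to handle the two parts separately: part~(1) reduces to counting maximal abelian subgroups in $PGL(2,q)$, while part~(2) requires an explicit construction since $PSL(3,3)$ is not an AC-group.

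For part~(1), the key input is that $PGL(2,q)$ is a centerless AC-group whenever $q=4$ or $q>5$: every noncentral element lies in a unique maximal abelian subgroup, of one of three types, namely a conjugate of the Sylow $p$-subgroup (elementary abelian of order $q$, with $q+1$ conjugates), a split torus (cyclic of order $q-1$, with $q(q+1)/2$ conjugates), or a nonsplit torus (cyclic of order $q+1$, with $q(q-1)/2$ conjugates). By Remark~1.1 we then have $\omega(\mathcal{N}_{PGL(2,q)})=\omega(\mathcal{A}_{PGL(2,q)})$, and in a centerless AC-group a maximum clique of the noncommuting graph is obtained by picking exactly one nontrivial element from each maximal abelian subgroup, giving
\[
(q+1)+\frac{q(q+1)}{2}+\frac{q(q-1)}{2}\;=\;q^2+q+1.
\]
To pass to $GL(2,q)$ I use the canonical projection $\pi:GL(2,q)\to PGL(2,q)$, whose kernel is the center $Z$. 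The restriction of $\pi$ to any nonnilpotent subset $A$ is injective, since two points of $A$ with the same image would generate a subgroup of $\langle g,Z\rangle$, which is abelian; hence $\pi(A)$ is a nonnilpotent subset of $PGL(2,q)$ of the same cardinality. Conversely, any clique of $PGL(2,q)$ lifts to a clique of $GL(2,q)$ by choosing arbitrary preimages, because $Z$ is central and a quotient of a nilpotent group is nilpotent. The equality $\omega(\mathcal{N}_{GL(2,q)})=\omega(\mathcal{N}_{PGL(2,q)})$ follows.

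For part~(2), since $\gcd(3,3-1)=1$ the center of $SL(3,3)$ is trivial, so the claimed equality with $\omega(\mathcal{N}_{PSL(3,3)})$ is automatic; only the bound $\geq 1015$ remains. The strategy is to exhibit a concrete nonnilpotent subset of that cardinality. A natural seed is the set of $144$ Singer cycles of $SL(3,3)$, which are self-centralizing cyclic subgroups of order $13$ with normalizer of order $39$: distinct Singer cycles intersect trivially, and any two nontrivial elements in different Singer cycles generate a nonnilpotent subgroup. One then augments this with carefully chosen representatives from the maximal tori of orders $4$ and $8$ and from the Sylow $3$-subgroups (of order $27$), using the conjugacy-class and centralizer data of $SL(3,3)$ to verify pairwise nonnilpotency. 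The main obstacle is bookkeeping: because $SL(3,3)$ is not an AC-group, its maximal abelian subgroups overlap and one must check, pair by pair (organized by conjugacy class), that every newly added element does not sit with an earlier choice inside a common nilpotent subgroup. I would proceed by iterating over pairs of conjugacy classes, deciding nilpotency of $\langle a,b\rangle$ from the normalizer/centralizer structure of the relevant maximal torus or Sylow subgroup, and then summing the admissible contributions to reach the lower bound $1015$.
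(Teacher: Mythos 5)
Your part (1) rests on a false premise. The group $PGL(2,q)$ is \emph{not} an AC-group when $q$ is odd and $q\geq 5$: the centralizer in $PGL(2,q)$ of the image of $\mathrm{diag}(1,-1)$ contains the diagonal torus \emph{and} the Weyl element (which conjugates $\mathrm{diag}(1,-1)$ to its negative, hence to the same element of $PGL$), so it is dihedral of order $2(q-1)$ and nonabelian; concretely, $PGL(2,5)\cong S_5$, where a transposition has centralizer of order $12$. Consequently Remark 1.1 does not apply, your three families are not all of the maximal abelian subgroups (for odd $q$ there are also Klein four-subgroups spanned by commuting involutions lying in \emph{distinct} subgroups of your partition), and ``one nontrivial element from each'' need not even be a clique of $\mathcal{N}_{PGL(2,q)}$, since two such commuting involutions generate a $2$-group, which is nilpotent. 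Your argument is fine for even $q$, where $PGL(2,q)=PSL(2,q)$ really is an AC-group, and your passage between $GL(2,q)$ and $PGL(2,q)$ is correct (it is part (3) of Lemma \ref{lm1}); but for odd $q>5$ the count has to be obtained some other way. The paper does it by a sandwich: $q^2+q+1=\omega(\mathcal{N}_{PSL(2,q)})\leq\omega(\mathcal{N}_{PGL(2,q)})=\omega(\mathcal{N}_{GL(2,q)})\leq\omega(\mathcal{A}_{GL(2,q)})=q^2+q+1$, quoting Proposition 4.2 of \cite{zar} for the left end and Proposition 4.3 of \cite{Ab3} for the right end.

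In part (2) the reduction $\omega(\mathcal{N}_{PSL(3,3)})=\omega(\mathcal{N}_{SL(3,3)})$ is fine, but the lower bound $1015$ is never actually established: what you give is a plan (``augment with carefully chosen representatives\dots I would proceed by iterating over pairs of conjugacy classes\dots'') rather than a construction, and the pieces you name do not match the actual structure of the group. The paper's GAP computation shows that the relevant proper nilpotentizers of $PSL(3,3)$ have orders $6$, $16$, $13$, $27$ (not tori of orders $4$ and $8$), occurring $468$, $351$, $144$, $52$ times respectively, with $468+351+144+52=1015$, and that each of these nilpotentizers is a nilpotent \emph{subgroup}. That last fact is what makes the verification of pairwise nonnilpotency automatic: if $\langle a,b\rangle$ were nilpotent for two chosen representatives, then $a\in nil_G(b)$ and nilpotency of the nilpotentizers forces $nil_G(a)=nil_G(b)$, contradicting the choice of one representative per nilpotentizer. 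Without those explicit counts and the nilpotency of the nilpotentizers (both obtained by machine computation in the paper), your sketch does not reach $1015$.
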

In particular, we conjecture that
$\omega(\mathcal{N}_{PSL(3,3)})=\omega(\mathcal{N}_{PGL(3,3)})=1015.$
\begin{thm}\label{t3}
Let $G$ be a nonabelian finite semisimple group. Then
$\omega(\mathcal{N}_{G})\leq 72$ if and only if $G\cong A_5,
PSL(2,7), S_5$ or $PGL(2,7)$.
\end{thm}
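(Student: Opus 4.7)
My plan combines the structure of nonabelian finite semisimple groups with the classification of finite simple groups (CFSG) and the explicit counts of Theorems~\ref{t1} and~\ref{t2}. A nonabelian finite semisimple group $G$ has trivial solvable radical, so its socle is a direct product $N=S_1\times\cdots\times S_k$ of nonabelian finite simple groups, and $G$ embeds in $\operatorname{Aut}(N)$. Since every subgroup $H\leq G$ satisfies $\omega(\mathcal{N}_H)\leq \omega(\mathcal{N}_G)$, the central task is to control the simple factors.

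First I would classify the nonabelian finite simple groups $S$ with $\omega(\mathcal{N}_S)\leq 72$. Theorem~\ref{t1} rules out every Suzuki group, as $\omega(\mathcal{N}_{Suz(8)})$ already exceeds $72$. Theorem~\ref{t2}(1) gives $\omega(\mathcal{N}_{PSL(2,q)})=q^2+q+1$ for $q$ even with $q\geq 4$, so the only even-$q$ survivor is $q=4$, i.e.\ $A_5$. For odd $q$, the formula of Proposition~4.2 of~\cite{zar} for $\omega(\mathcal{N}_{PSL(2,q)})$ exceeds $72$ whenever $q\geq 9$, leaving only $q=5$ (again $A_5$) and $q=7$ (yielding $PSL(2,7)$). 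Theorem~\ref{t2}(2) rules out $PSL(3,3)$. The remaining candidates are the few simple groups of low order not covered by these families: $A_n$ for $6\leq n\leq 9$, $M_{11}$, $M_{12}$, $PSL(3,4)$, $PSU(3,3)$, $PSp(4,3)$, and a handful of others. In each case I would exhibit a pairwise nonnilpotent subset of size greater than $72$, typically by selecting one element from each Sylow $p$-subgroup for an appropriate prime $p$ and combining with elements from Sylow subgroups of other primes; for example in $A_6$ the $36$ Sylow $5$-subgroups together with the $10$ Sylow $3$-subgroups already provide more than $72$ pairwise nonnilpotent elements. The outcome is that the only simple groups with $\omega(\mathcal{N}_S)\leq 72$ are $A_5$ and $PSL(2,7)$, both of which satisfy $\omega(\mathcal{N}_S)\geq 21$.

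Next I would reduce to the almost simple case $k=1$. If $A_i\subseteq S_i$ is pairwise nonnilpotent for $i=1,\dots,k$, then the Cartesian product $A_1\times\cdots\times A_k\subseteq N$ is pairwise nonnilpotent in $G$: two distinct tuples differ in some coordinate $j$, and the subgroup they generate projects onto $\langle a_j,b_j\rangle\leq S_j$, which is nonnilpotent, forcing nonnilpotence of the entire subgroup. Hence $\omega(\mathcal{N}_G)\geq\prod_{i=1}^{k}\omega(\mathcal{N}_{S_i})$. By the previous step each $\omega(\mathcal{N}_{S_i})\geq 21$, so $k\geq 2$ would give $\omega(\mathcal{N}_G)\geq 441>72$. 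Thus $k=1$ and $G$ is almost simple with socle $S\in\{A_5,PSL(2,7)\}$.

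Finally I would enumerate the almost simple extensions. Since $\operatorname{Out}(A_5)=\operatorname{Out}(PSL(2,7))=C_2$, the four possibilities are $G\in\{A_5,\,S_5=\operatorname{Aut}(A_5),\,PSL(2,7),\,PGL(2,7)=\operatorname{Aut}(PSL(2,7))\}$, and I must verify $\omega(\mathcal{N}_G)\leq 72$ for each. We have $\omega(\mathcal{N}_{A_5})=21$ (a short computation: $A_5$ is a centerless $AC$-group, so by the Remark we count its $5+10+6=21$ maximal abelian subgroups), $\omega(\mathcal{N}_{PGL(2,7)})=7^2+7+1=57$ by Theorem~\ref{t2}(1), $\omega(\mathcal{N}_{PSL(2,7)})$ from Proposition~4.2 of~\cite{zar}, and $\omega(\mathcal{N}_{S_5})$ by a direct argument (noting $S_5\cong PGL(2,5)$ and adapting the proof of Theorem~\ref{t2}(1) to cover the excluded case $q=5$). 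The principal obstacle is the second step above, the simple-group enumeration: the explicit formulas cover the main families, but one must individually handle the remaining small simple groups ($A_6$--$A_9$, low-dimensional Lie-type groups over small fields, and the smallest Mathieu groups), exhibiting in each a nonnilpotent subset of size exceeding $72$.
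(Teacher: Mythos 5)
Your overall architecture coincides with the paper's: (i) classify the nonabelian finite simple groups $S$ with $\omega(\mathcal{N}_S)\leq 72$; (ii) use the product inequality $\omega(\mathcal{N}_{S_1\times\cdots\times S_k})\geq\prod_i\omega(\mathcal{N}_{S_i})$ together with $\omega(\mathcal{N}_{S_i})\geq 21$ to force a single simple factor in the socle (the paper's Lemma \ref{l1} applied to the centerless $CR$-radical); and (iii) run through the almost simple groups over $A_5$ and $PSL(2,7)$, using $G\hookrightarrow \mathrm{Aut}(R)$ and $C_G(R)=1$, and verify the bound for each of $A_5$, $S_5$, $PSL(2,7)$, $PGL(2,7)$. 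Steps (ii) and (iii) are sound and essentially identical to the paper's.

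The gap is in step (i). You reduce to ``the few simple groups of low order not covered by these families: $A_n$ for $6\leq n\leq 9$, $M_{11}$, $M_{12}$, $PSL(3,4)$, $PSU(3,3)$, $PSp(4,3)$, and a handful of others,'' but you give no argument that the simple groups outside the families $PSL(2,q)$, $Suz(q)$, $PSL(3,3)$ that need checking form a finite list, let alone that particular list. The hypothesis $\omega(\mathcal{N}_S)\leq 72$ does not by itself bound $|S|$, so ``low order'' has to be earned: a priori you must also exclude $PSU(3,q)$ for all $q$, $PSp(4,q)$, $G_2(q)$, every sporadic group, and so on. The paper's Lemma \ref{l11} obtains the finite reduction from Corollary 3.4 and Theorem 4.5 of \cite{Az}, which rest on Thompson's classification of minimal simple groups: every nonsolvable finite group has a section isomorphic to one of $PSL(2,2^p)$, $PSL(2,3^p)$, $PSL(2,p)$, $Sz(2^p)$, $PSL(3,3)$, and $\omega(\mathcal{N}_{-})$ is monotone under subgroups and nonabelian quotients (Lemma \ref{lm1}); among these minimal simple groups only $A_5$ and $PSL(2,7)$ meet the bound (note $\omega(\mathcal{N}_{PSL(2,8)})=73$, which is precisely why $72$ is the threshold). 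Even after that reduction, a residue of simple groups whose minimal simple sections are only $A_5$ and $PSL(2,7)$ must be eliminated by hand --- this is exactly why the paper's Lemma \ref{l11} singles out $Suz(2^{2m+1})$ (killed by Theorem \ref{t1}) and $PSU(3,7)$ (killed by its more than $73$ Sylow $43$-subgroups). Your proposal contains neither the reduction principle nor these exceptional cases ($PSU(3,7)$ is absent from your candidate list), so step (i) as written is an assertion rather than a proof.
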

From Theorem \ref{t3} we obtain a nice characterization for $A_5$
and $PSL(2,7)$ (see Corollary  \ref{co1}, below).\\

 Throughout this paper all groups are finite and we will
use the usual notation, for example $A_n,~ S_n, ~
SL(n,q),~GL(n,q), ~ PSL(n,q),~PGL(n,q)$ and $Suz(q)$
\rm($q=2^{2m+1}$ and $m>0$), respectively, denote the alternating
groups on $n$ letters, symmetric group on $n$ letters, special
linear group of degree $n$ over the finite field of size $q$,
general linear group of degree $n$ over the finite field of size
$q$, projective special linear group of degree $n$ over the finite
field of size $q$, projective general linear group of degree $n$
over the finite field of size $q$ and the Suzuki group over the
field with $q$ elements.

\section{\textbf{ $\mathcal{N}(n)$-groups}}

Let $G$ be a group. Recall that the centralizer of an element
$a\in G$ can be defined by $C_G(a)=\{ b\in G\mid \langle a,
b\rangle ~\mbox{is~ abelian} \}$ and it is a subgroup of $G$. If,
in the above definition, we replace the word "abelian" with the
word "nilpotent" we get a subset of $G$, called the {\sl
nilpotentizer} of an element $a\in G$ (see also \cite{z}). In
fact, this subset is an extension of the centralizer. To be
explicit, define the {\sl nilpotentizer} of an element $a\in G$,
denoted by $nil_G(a)$, by
\[ nil_G(a)=\{b\in G\mid\langle a, b\rangle ~~\textrm{is
nilpotent}\}.\]

Also for a nonempty subset $S$ of $G$, we define the nilpotentizer
of $S$ in $G$, to be
$$nil_G(S)=\bigcap_{x\in S} nil_G(x).$$  When $S=G$; we call $nil_G(G)$ the
nilpotentizer of $G$, and it will be  denoted  by $nil(G)$. Thus
$$nil(G)=\{x\in G \mid\langle x, y \rangle ~~\textrm{is nilpotent for all}\; y\in G\}.$$
It is not known whether the subset $nil(G)$ is a subgroup of $G$,
but in many important cases it is a subgroup. In particular,
$nil(G)$ is equal to the hypercenter $Z^*(G)$ of $G$ whenever $G$
satisfies the maximal condition on its subgroups or $G$ is a
finitely generated solvable group (see Proposition 2.1 of
\cite{z}). Also note that in general for an element $x$ of a group
$G$, $nil_G(x)$ is not a subgroup of $G$. For example, in the
group $G=S_4$, clearly the element $u=(13)(24)$ belongs to
$O_2(S_4)$, so $nil_G(u)$ contains the union of all Sylow
$2$-subgroups. The only other elements of $S_4$ are $3$-cycles.
Since none of these centralize $u$, $nil_G(u)$ is exactly the
union of all three Sylow $2$-subgroups, that is $nil_G(u)$ is not
subgroup \rm(see also Lemma 3.3 of
 \cite{z}).

  \begin{defn}
 We say that a group $G$ has $n~nilpotentizer$
(or that $G$ is an $\mathcal{N}(n)$-group) if $|nilp(G)|= n$,
where $nilp(G) = \big\{nil_G(g)\mid g\in G\big\}$.
\end{defn}
 It is clear that
a group is an $\mathcal{N}(1)$-group if and only if it is weakly
nilpotent. One of our motivations for the above definition is the
following Proposition. (In fact,  for a nonweakly nilpotent group
$G$, we give some interesting relations between
$\omega(\mathcal{N}_{G})$ and $|nilp(G)|$.)
\begin{prop}\label{l00}
Let $G$ be a non weakly nilpotent group. Then we have\\
(1)\; $\omega(\mathcal{N}_{G}) + 1 \leq
|nilp(G)|$.\\
(2)\; If every nilpotentizer of $G$ is a nilpotent subgroup
{\rm(}or $G$ is an $\mathcal{N}n$-group, see \cite{Az}{\rm)}, then
$\omega(\mathcal{N}_{G}) + 1 = |nilp(G)|$.
\end{prop}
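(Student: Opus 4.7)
The plan is to relate a nonnilpotent subset of maximum size in $G$ to the set $nilp(G)$ of distinct nilpotentizers via the assignment $a \mapsto nil_G(a)$, and to exploit this correspondence in both directions.

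For part (1), I would take a nonnilpotent subset $A \subseteq G$ with $|A| = \omega(\mathcal{N}_G)$. First, the map $A \to nilp(G)$ sending $a$ to $nil_G(a)$ is injective: if $a, b \in A$ are distinct, then $\langle a, b\rangle$ is not nilpotent, so $b \notin nil_G(a)$, while $b \in nil_G(b)$ since $\langle b\rangle$ is cyclic, hence $nil_G(a) \neq nil_G(b)$. This already yields $|nilp(G)| \geq |A|$. To upgrade to the strict bound, I would add the element $G = nil_G(1) \in nilp(G)$ and show it does not occur as $nil_G(a)$ for any $a \in A$. Indeed, since $G$ is not weakly nilpotent there exist $x, y$ with $\langle x, y\rangle$ nonnilpotent, so $|A| \geq 2$; fixing any $a \in A$ and picking $b \in A \setminus \{a\}$, the relation $b \notin nil_G(a)$ shows $nil_G(a) \neq G$. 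Thus $|nilp(G)| \geq |A| + 1 = \omega(\mathcal{N}_G) + 1$.

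For part (2), I would reverse the construction under the added hypothesis that each nilpotentizer is a nilpotent subgroup. Enumerate $nilp(G) = \{N_0, N_1, \ldots, N_{n-1}\}$ with $N_0 = G$, and for each $i \geq 1$ pick a representative $g_i$ with $nil_G(g_i) = N_i$, so that each such $N_i$ is a proper nilpotent subgroup. The central claim is that $\{g_1, \ldots, g_{n-1}\}$ is itself a nonnilpotent subset. Suppose for contradiction that $\langle g_i, g_j\rangle$ is nilpotent for some $i \neq j$; then $g_j \in N_i$, and for any $x \in N_i$ the subgroup $\langle g_j, x\rangle$ lies inside the nilpotent group $N_i$ and so is nilpotent, forcing $x \in nil_G(g_j) = N_j$. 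Hence $N_i \subseteq N_j$, and by symmetry $N_i = N_j$, contradicting their distinctness. This gives $\omega(\mathcal{N}_G) \geq n - 1 = |nilp(G)| - 1$, which combined with (1) yields equality.

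The main obstacle is interpretational rather than computational. The hypothesis in (2) that ``every nilpotentizer is a nilpotent subgroup'' cannot be taken literally, because $nil_G(1) = G$ is always a member of $nilp(G)$ while $G$ itself is non-nilpotent (as it is not weakly nilpotent). The statement must be read as a condition on the proper members of $nilp(G)$, equivalently on nilpotentizers of elements outside $nil(G)$, and the key technical step used above, namely that ``$N_i$ is a nilpotent subgroup containing $g_j$ forces $N_i \subseteq nil_G(g_j)$'', needs only this weaker form of the hypothesis.
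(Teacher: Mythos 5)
Your proof is correct. Part (1) coincides with the paper's argument: distinct elements of a clique have distinct nilpotentizers, and $nil_G(1)=G$ supplies one extra member of $nilp(G)$ not realized on the clique (you are in fact more careful than the paper about why $G$ cannot occur as $nil_G(a)$ for $a$ in the clique). For part (2) you take the reverse route. The paper fixes a maximal nonnilpotent subset $A=\{a_1,\dots,a_n\}$ and shows that every proper nilpotentizer $nil_G(x)$ equals some $nil_G(a_i)$: by maximality of $A$ one has $x\in nil_G(a_i)$ for some $i$, and then the two inclusions $nil_G(a_i)\leq nil_G(x)$ and $nil_G(x)\leq nil_G(a_i)$ both follow from the hypothesis that these sets are nilpotent subgroups; this yields $|nilp(G)|\leq \omega(\mathcal{N}_G)+1$ directly. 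You instead choose one representative for each proper nilpotentizer and verify that these representatives form a clique, which gives $\omega(\mathcal{N}_G)\geq |nilp(G)|-1$, and you then invoke part (1) to conclude. Both arguments turn on the same observation --- a nilpotent subgroup of the form $nil_G(a)$ containing $b$ is contained in $nil_G(b)$ --- so the difference is only in which inequality is established directly; your version has the small advantage of not needing the maximality of $A$. Your closing remark is also well taken: since $nil_G(1)=G$ and $G$ is not weakly nilpotent, the hypothesis of (2) can only be meant for the proper nilpotentizers, and that weaker reading is all that either proof actually uses.
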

\begin{proof}
(1) Assume that $X=\{a_1, \ldots, a_t\}$ be an arbitrary clique
for the graph $\mathcal{N}_G$. It follows that $nil_G(a_i)\not
=nil_G(a_j)$ for every $1\leq i<j\leq t$. From which it follows
that $|X|+1 \leq |nilp(G)|$, as $nil_G(e)=G$ where $e$ is the
trivial element of $G$, and so $\omega(\mathcal{N}_{G}) + 1 \leq
|nilp(G)|$.

 (2)\; Let $A= \{a_1,\ldots, a_n\}$
be a maximal nonnilpotent subset of $G$ and $nil_G(x)$ be a proper
nilpotentizer of $G$. Then there exists a $1\leq i\leq n$ such
that $x\in nil_G(a_i)$ by the maximality of $A$. Since
$nil_G(a_i)$ is a nilpotent subgroup, $nil_G(a_i)\leq nil_G(x)$.
On the other hand $a_i\in nil_G(x)$ gives $nil_G(x)\leq
nil_G(a_i)$. Therefore $nil_G(x)= nil_G(a_i)$ and so
$|nilp(G)|=|\{G,nil_G(a_i) \mid 1\leq i \leq n \}|$. This
completes the proof.
\end{proof}
Here, we give some properties about $\mathcal{N}(n)$-groups.
\begin{prop}\label{l100}
Let $G$ be an $\mathcal{N}(n)$-group. Then\\
(1)\; $G$ is nilpotent if and only if  $n<5$. The group $S_3$ is
an
$\mathcal{N}(5)$-group;\\
(2) If $n<22$, then $G$ is solvable. The group $A_{5}$ is an
$\mathcal{N}(22)$-group.
\end{prop}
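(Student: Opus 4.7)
The forward direction of (1) is immediate: if $G$ is nilpotent, every two-generated subgroup of $G$ is nilpotent, so $nil_G(g) = G$ for every $g \in G$ and $|nilp(G)| = 1$. For the example $S_3$, direct enumeration gives $nilp(S_3) = \{S_3, A_3, \langle(12)\rangle, \langle(13)\rangle, \langle(23)\rangle\}$, of cardinality $5$.

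For the converse of (1), the plan is to prove the contrapositive: if $G$ is non-nilpotent, then $|nilp(G)| \geq 5$. Invoking Schmidt's classification of minimal non-nilpotent groups, $G$ contains a subgroup $H = P \rtimes Q$ where $P$ is a normal Sylow $p$-subgroup, $Q$ is a cyclic non-normal Sylow $q$-subgroup ($p \neq q$), and every proper subgroup of $H$ is nilpotent. Using the faithful action of $Q$ on the Frattini quotient of $P$ that Schmidt's structure theorem provides, one verifies that the nilpotentizers in $H$ are exactly $H$, $P$, and the conjugates of $Q$: for $1 \neq x \in P$ one has $nil_H(x) = P$, and for $1 \neq y \in Q^g$ one has $nil_H(y) = Q^g$. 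Since $Q$ is not normal in $H$, the Sylow theorem gives $n_q(H) \geq q + 1 \geq 3$, whence $|nilp(H)| \geq 1 + 1 + 3 = 5$. Combined with the restriction identity $nil_H(a) = H \cap nil_G(a)$, which sends distinct nilpotentizers in $H$ to distinct nilpotentizers in $G$, this yields $|nilp(G)| \geq 5$.

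For the $A_5$ assertion in (2), the nilpotent subgroups of $A_5$ are precisely $\{1\}$, the five Klein four-subgroups, the ten Sylow $3$-subgroups, and the six Sylow $5$-subgroups; every non-identity element of $A_5$ lies in a \emph{unique} such subgroup, which coincides with its nilpotentizer. Hence $|nilp(A_5)| = 1 + 5 + 10 + 6 = 22$. For the implication $n < 22 \Rightarrow G$ solvable, I argue contrapositively. Assuming $G$ is non-solvable, take a minimal non-solvable subgroup $H \leq G$, and let $R(H)$ be its solvable radical. Standard reductions show that $H/R(H)$ is a minimal simple group; by Thompson's classification, $H/R(H) \cong A_5$, $PSL(2,p)$ (for certain primes $p$), $PSL(2, 2^p)$, $PSL(2, 3^p)$, $PSL(3, 3)$, or $Sz(2^p)$. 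When $R(H) = 1$, the explicit formulas in Theorems \ref{t1}, \ref{t2} of the present paper and Proposition~4.2 of \cite{zar} yield $|nilp(H)| \geq 22$, with equality attained only for $H \cong A_5$. When $R(H) \neq 1$, one reduces to the simple case by observing that nilpotency of two-generated subgroups is preserved under central extensions (together with additional analysis for the non-central case). Either way $|nilp(G)| \geq |nilp(H)| \geq 22$.

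The main obstacle is the case analysis in part (2): verifying $|nilp(H)| \geq 22$ across Thompson's list of minimal simple groups, and handling the minimal non-solvable extensions whose solvable radical acts non-trivially. The bound $22$ is attained tightly only by $A_5$; all other minimal simple families yield nilpotentizer counts far exceeding $22$ via the explicit formulas cited above.
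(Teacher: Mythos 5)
The paper disposes of this proposition in one line: part (1) of Proposition \ref{l00} gives $\omega(\mathcal{N}_G)+1\le |nilp(G)|$ for non weakly nilpotent $G$, and the main result of \cite{En} says that a finite group with $\omega(\mathcal{N}_G)\le 3$ is nilpotent and one with $\omega(\mathcal{N}_G)\le 20$ is solvable; the contrapositives are exactly $n<5\Rightarrow$ nilpotent and $n<22\Rightarrow$ solvable. You instead undertake to reprove Endimioni's bounds from scratch. Your forward direction of (1) and the computations $|nilp(S_3)|=5$, $|nilp(A_5)|=22$ are correct (the $A_5$ count follows from the partition into five Klein four-groups, ten Sylow $3$-subgroups and six Sylow $5$-subgroups together with Lemma \ref{lr}). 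Your Schmidt-group argument for the converse of (1) is workable, but the structural claim that the nilpotentizers of $H=P\rtimes Q$ are exactly $H$, $P$ and the conjugates of $Q$ is false as stated: in a Schmidt group $\Phi(P)$ is central (e.g.\ $H=SL(2,3)$, where $nil_H(-1)=H\ne P$), so $nil_H(x)=P$ fails for $1\ne x\in\Phi(P)$. This is repairable --- you still get $H$, one nilpotentizer containing $P$, and at least $q+1\ge 3$ distinct nilpotentizers attached to the conjugates of $Q$ --- but it needs to be patched.

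The genuine gap is in part (2). When the minimal non-solvable subgroup $H$ has $R(H)\ne 1$, your reduction to the minimal simple quotient $H/R(H)$ requires an inequality of the form $|nilp(H)|\ge |nilp(H/R(H))|$, and no such inequality is available: Lemma \ref{lt} of the paper gives it only for quotients by subgroups of the hypercenter, and for a general (non-central, even solvable) normal subgroup $N$ the nilpotency of $\langle xN,yN\rangle$ neither implies nor is implied by that of $\langle x,y\rangle$, so distinct nilpotentizers downstairs need not lift to distinct nilpotentizers upstairs. You flag this as ``additional analysis for the non-central case'' but supply none; that analysis is precisely the content of \cite{En} that the paper is citing. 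Separately, the verification that every group on Thompson's list satisfies $|nilp(H)|\ge 22$ is asserted rather than carried out (appealing to Theorems \ref{t1} and \ref{t2} is not circular, since their proofs do not use this proposition, but the small cases such as $PSL(2,7)$, $PSL(2,8)$, $PSL(2,27)$ still have to be checked against the bound $22$). As written, the proposal establishes part (1) and the two example computations, but not the solvability implication in part (2).
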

\begin{proof}
The result follows from part (1) of Proposition \ref{l00} and the
main result of \cite{En}.
\end{proof}
\begin{cor}
There is no finite $\mathcal{N}(n)$-group with $n\in\{2, 3, 4\}$.
\end{cor}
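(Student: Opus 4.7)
The plan is to argue by contradiction using the two characterizations already available in the excerpt. Suppose, toward a contradiction, that $G$ is a finite $\mathcal{N}(n)$-group with $n \in \{2,3,4\}$. Since $n < 5$, part (1) of Proposition \ref{l100} forces $G$ to be nilpotent.

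Now I would invoke the standard fact that subgroups of nilpotent groups are nilpotent: every two-generated subgroup $\langle a,b \rangle$ of $G$ therefore lies in $\mathcal{N}$, which is exactly the definition of $G$ being weakly nilpotent. The remark immediately following Definition 2.1 in the paper states that a group is an $\mathcal{N}(1)$-group if and only if it is weakly nilpotent; in that case $nilp(G) = \{G\}$, so $n = 1$.

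This contradicts the assumption $n \in \{2,3,4\}$, completing the proof. The only nontrivial content is recognizing that the interval $\{2,3,4\}$ is squeezed out by the combination of Proposition \ref{l100}(1) (nilpotency forces $n \le 4$) and the trivial observation that nilpotency in fact forces $n = 1$; there is no genuine obstacle beyond noticing this tension.
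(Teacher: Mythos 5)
Your proposal is correct and follows exactly the route the paper intends: the corollary is stated without proof immediately after Proposition \ref{l100}, and the intended argument is precisely that $n<5$ forces $G$ nilpotent, hence weakly nilpotent, hence $n=1$, ruling out $n\in\{2,3,4\}$. Nothing is missing.
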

Here (for infinite groups) we prove that every arbitrary
$\mathcal{N}(n)$-group with $n=4$ is an Engel group. Recall that a
group $G$ is an Engel group if for each ordered pair $(x, y)$ of
elements in $G$ there is a positive integer $n=n(x, y)$ such that
$[x,_ny]=1$, where $[x, y] = x^{-1}y^{-1}xy=x^{-1}x^y$ and
$[x,_{m+1} y] = [[x,_{m} y], y]$ for all positive integers $m$.
\begin{thm}\label{ts2}
Let $G$ be an \rm{(}not necessarily finite\rm{)}
$\mathcal{N}(n)$-group with $n=4$. Then it is an Engel group.
\end{thm}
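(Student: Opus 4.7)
The plan is to show that for every pair $x, y \in G$, there is an integer $n \geq 1$ with $[x,_n y] = 1$. Write $nilp(G) = \{G, N_1, N_2, N_3\}$, the $N_i$'s being the three proper nilpotentizers. Two trivial cases first: if $y \in nil(G)$, then $\langle x, y\rangle$ is nilpotent; and if $nil_G(x) = nil_G(y)$, then since every element lies in its own nilpotentizer, $y \in nil_G(y) = nil_G(x)$, so $\langle x, y\rangle$ is nilpotent. In either case the Engel relation follows. So I may assume $nil_G(x) \neq nil_G(y)$ and both are proper.

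The key step is a pigeonhole argument. A direct check shows $nil_G(x^{y^i}) = nil_G(x)^{y^i}$, so the five conjugates $x, x^y, x^{y^2}, x^{y^3}, x^{y^4}$ have nilpotentizers lying in the four-element set $nilp(G)$. Hence $nil_G(x^{y^i}) = nil_G(x^{y^j})$ for some $0 \leq i < j \leq 4$, and by the observation above $\langle x^{y^i}, x^{y^j}\rangle$ is nilpotent. Conjugating by $y^{-i}$ yields $\langle x, x^{y^M}\rangle$ nilpotent for $M := j - i \in \{1,2,3,4\}$; using $x^{y^M} = x\,[x, y^M]$, this is equivalent to $\langle x, [x, y^M]\rangle$ being nilpotent.

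The hard step is converting this ``conjugation-nilpotence'' into a genuine Engel relation $[x,_n y] = 1$. My plan is to invoke a Baer-type theorem: if $\langle x, x^g\rangle$ is nilpotent for every $g \in G$, then $x$ lies in the Hirsch--Plotkin (locally nilpotent) radical, hence is an Engel element. The pigeonhole delivers this only for $g$ a specific power $y^M$; the bootstrap to all $g$ exploits that $\langle y\rangle$ permutes the finite set $nilp(G)$, so a uniform power $y^{M'}$ (with $M'$ dividing $4!$) normalises $nil_G(x)$, after which a Hall--Witt commutator calculation trades an Engel relation in $y^{M'}$ for one in $y$. I anticipate the principal obstacle to be precisely this bootstrap: strengthening the single-$M$ pigeonhole conclusion to a uniform-in-$g$ statement before any classical Engel-element theorem applies. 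An alternative route, which may avoid the bootstrap, is to run the pigeonhole directly on the iterated commutators $c_k := [x,_k y]$, $k = 0, 1, 2, 3, 4$, and show that the resulting coincidence $\langle c_i, c_j\rangle$ nilpotent with $j - i \leq 4$ forces the descending chain $c_0, c_1, c_2, \ldots$ to terminate at $1$ after finitely many further steps.
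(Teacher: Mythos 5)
Your pigeonhole is applied to the wrong family of elements, and this creates the gap you yourself flag as ``the principal obstacle.'' Running it over the five conjugates $x, x^{y}, \ldots, x^{y^4}$ only yields that $\langle x, x^{y^M}\rangle$ is nilpotent for \emph{some} $M=M(x,y)\in\{1,2,3,4\}$ depending on the pair; from this you can extract an Engel relation for the pair $(y^M,x)$, but not for $(y,x)$, and neither of your proposed repairs closes that distance. The Baer/Hirsch--Plotkin route is doubly unavailable: the theorem you want requires $\langle x, x^g\rangle$ nilpotent for \emph{all} $g$ (which the pigeonhole does not provide), and in any case such results need finiteness or chain conditions, whereas the theorem explicitly allows $G$ to be infinite. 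The ``normalise $nil_G(x)$ by a uniform power of $y$'' idea and the alternative pigeonhole on the iterated commutators $c_k=[x,_k y]$ are both left as sketches; as written, nothing in the proposal actually produces $[x,_n y]=1$.

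The paper avoids the bootstrap entirely by a smarter choice of the four elements to pigeonhole: $x$, $y$, $xy$ and $x^y$. If any two of their nilpotentizers coincide (and if all four were distinct one of them would have to be $nil_G(e)=G$, which also forces nilpotence), a short case check shows that $\langle x,y\rangle$ or $\langle x, x^y\rangle$ is nilpotent; since $\langle x, x^y\rangle\leq\langle x,y\rangle$, in every case $\langle x, x^y\rangle$ is nilpotent --- that is, the conclusion you wanted holds with $M=1$ for \emph{every} pair. The passage to the Engel condition is then elementary and needs no Baer-type theorem: $[y,x]=(x^{-1})^y x$ lies in the nilpotent subgroup $\langle x,(x^{-1})^y\rangle=\langle x, x^y\rangle$, the identity $[ab,c]=[a,c]^b[b,c]$ shows the iterated commutators $[y,_{k}x]$ remain in that subgroup, and hence $[y,_{t+1}x]=1$ once $t$ exceeds its nilpotency class. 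If you want to salvage your draft, replace your five-conjugate pigeonhole with the paper's four-element one and drop the appeal to the locally nilpotent radical.
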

\begin{proof} Suppose that $G$ is an $\mathcal{N}(n)$-group with $n=4$. We claim
that for  every two arbitrary
$x, y\in G$, the subgroup $\langle x, x^y\rangle$ is nilpotent.\\
To see that, suppose, a contrary, that  there exist  $x, y\in G$
such that $\langle x, x^y\rangle$ is not nilpotent. Now we
consider the set
$$N=\{nil_G(x), nil_G(y), nil_G(xy), nil_G(x^y)\}.$$
  As $|nilp(G)|=4$, it follows that there exist at least two
elements $nil_G(a),~nil_G(b)\in N$ such that $nil_G(a)=nil_G(b)$.
It implies that $\langle x, y\rangle$ or $\langle x, x^y\rangle$
is a nilpotent group. Now since $\langle x, x^y\rangle \leq
\langle x, y\rangle$, it follows that $\langle x, x^y\rangle$ is
nilpotent, a contrary. Hence for every two arbitrary $x, y\in G$,
$\langle x, x^y\rangle$ is nilpotent and so $\langle x,
{x^{-1}}^y\rangle$ is nilpotent. It follows that
$[y^{-1}x^{-1}y,_tx]=1$ for some $t\in\mathbb{N}$. Now by using
the relation $[ab, c]=[a,c]^b[b, c]$ we get $[y,_{t+1}x]=1$,
namely $G$ is an Engel group, as required.
\end{proof}

\begin{lem}\label{l0}
Suppose that $G$ is a group and $H\leq G$. Then $|nilp(H)|\leq
|nilp(G)|$.
\end{lem}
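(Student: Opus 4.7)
The plan is to produce a surjection from a certain subset of $nilp(G)$ onto $nilp(H)$, from which the inequality is immediate by counting. The whole argument hinges on the single observation that the property ``$\langle h,b\rangle$ is nilpotent'' is intrinsic to that subgroup and does not depend on the ambient group, so for any $h\in H$ we have $nil_H(h)=nil_G(h)\cap H$.

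Concretely, I would first introduce the set $\mathcal{S}=\{nil_G(h):h\in H\}\subseteq nilp(G)$ and define a map $\phi:\mathcal{S}\to nilp(H)$ by $\phi(nil_G(h))=nil_G(h)\cap H$. Well-definedness is free: if $nil_G(h_1)=nil_G(h_2)$ then intersecting both sides with $H$ gives the same subset. Surjectivity is also immediate, because the identity $nil_H(h)=nil_G(h)\cap H$ shows that every element of $nilp(H)$ lies in the image of $\phi$.

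From the existence of this surjection I conclude
\[
|nilp(H)|\;\leq\;|\mathcal{S}|\;\leq\;|nilp(G)|,
\]
which is the desired bound. There is essentially no obstacle to speak of; the only thing to be careful about is the trivial but crucial point that nilpotency of a two-generated subgroup is absolute, so that the formula $nil_H(h)=nil_G(h)\cap H$ really holds and the map $\phi$ is well-defined.
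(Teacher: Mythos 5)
Your proof is correct and follows exactly the paper's route: the paper's entire proof is the identity $nil_H(h)=nil_G(h)\cap H$, and your surjection $nil_G(h)\mapsto nil_G(h)\cap H$ is just the explicit counting argument that the paper leaves implicit. Nothing to add.
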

\begin{proof}
It follows easily from
$$nil_H(h)=nil_G(h)\cap H$$ for every element $h\in H$.
\end{proof}
\begin{lem}\label{lj1}
Suppose that $G_i$ is a finite group \rm{(}$i=1,\dots, t$\rm{)}.
Then $$|nilp(G_1\times G_2\times\dots\times
G_m)|=\prod_{i=1}^m|nilp(G_i)|.$$
\end{lem}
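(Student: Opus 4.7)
The plan is to reduce the counting identity to a componentwise description of the nilpotentizer in the direct product. Write $G:=G_1\times\cdots\times G_m$. The structural claim I would establish first is
$$nil_G\bigl((a_1,\ldots,a_m)\bigr) = nil_{G_1}(a_1)\times nil_{G_2}(a_2)\times\cdots\times nil_{G_m}(a_m)$$
for every $(a_1,\ldots,a_m)\in G$.

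To prove this identity, fix $g=(a_1,\ldots,a_m)$ and let $h=(b_1,\ldots,b_m)$ be arbitrary. I would observe that $\langle g,h\rangle$ sits inside $\langle a_1,b_1\rangle\times\cdots\times\langle a_m,b_m\rangle$ and surjects onto each $\langle a_i,b_i\rangle$ via the $i$-th coordinate projection $G\to G_i$. Since nilpotency passes to subgroups, to quotients, and to finite direct products of nilpotent groups, $\langle g,h\rangle$ is nilpotent if and only if $\langle a_i,b_i\rangle$ is nilpotent for every $i$. Translating, $h\in nil_G(g)$ iff $b_i\in nil_{G_i}(a_i)$ for every $i$, which is exactly the displayed identity.

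For the counting step I would use the uniqueness of factors in a product of subgroups: if $N_i,N_i'\leq G_i$ satisfy $N_1\times\cdots\times N_m = N_1'\times\cdots\times N_m'$, then applying the $i$-th projection forces $N_i=N_i'$ for all $i$. Consequently the assignment
$$nil_G\bigl((a_1,\ldots,a_m)\bigr)\longmapsto\bigl(nil_{G_1}(a_1),\ldots,nil_{G_m}(a_m)\bigr)$$
is a well-defined injection from $nilp(G)$ to $nilp(G_1)\times\cdots\times nilp(G_m)$. Surjectivity is immediate: given any tuple on the right, take $g=(a_1,\ldots,a_m)$ with the corresponding coordinates. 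Counting on both sides yields $|nilp(G)|=\prod_{i=1}^m|nilp(G_i)|$.

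The main obstacle, such as it is, lies in the nilpotency equivalence at the heart of the componentwise identity; the delicate point is that $\langle g,h\rangle$ is only a subgroup of, and in general not equal to, the product of the two-generator subgroups in each coordinate, so one must appeal both to the stability of nilpotency under passage to subgroups (one direction) and under homomorphic images (the other). Once this is in hand, the remainder of the argument is routine bookkeeping with direct products.
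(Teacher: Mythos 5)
Your proof is correct and follows essentially the same route as the paper: both arguments rest on the componentwise identity $nil_G\bigl((a_1,\ldots,a_m)\bigr)=nil_{G_1}(a_1)\times\cdots\times nil_{G_m}(a_m)$ and then count by noting that coordinate projections force equality of the tuples of factors. The only difference is that you prove this identity from scratch (embedding $\langle g,h\rangle$ into the product of the $\langle a_i,b_i\rangle$ and projecting onto each factor), whereas the paper simply cites it as Proposition 3.1 of \cite{zar}.
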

\begin{proof}
Put $H=\prod_{i=1}^m G_i$. Applying Proposition 3.1 of \cite{zar},
we can show that $nil_{H}(x_1,
\dots,x_t)=nil_{G_1}(x_1)\times\dots\times nil_{G_t}(x_t)$, for
all $(x_1,\dots,x_t)\in H$. It follows that for every $1\leq i\leq
t$ we have $$nil_{H}(x_1,\dots,x_t)=nil_H(y_1,y_2,\dots,y_t)$$ if
and only if $nil_{G_i}(x_i)=nil_{G_i}(y_i)$ and the result
follows.
\end{proof}

\begin{lem}\label{lt}
 Let $G$ be a group. Then
  $|nilp(G)|\geq |nilp(\frac{G}{Z^*(G)})|$.
\end{lem}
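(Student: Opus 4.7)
The plan is to construct a surjection from $nilp(G)$ onto $nilp(G/Z^*(G))$, which immediately yields the inequality. The key identity to establish is
$$nil_{G/Z^*(G)}(xZ^*(G)) = nil_G(x)/Z^*(G)$$
for every $x\in G$. First I would note that the right-hand side is well defined, since $Z^*(G)\subseteq nil(G)\subseteq nil_G(x)$ by Proposition~2.1 of \cite{z}. Granting the identity, the assignment $nil_G(x)\mapsto nil_G(x)/Z^*(G)$ is a well-defined map $nilp(G)\to nilp(G/Z^*(G))$, and it is surjective because every $nil_{G/Z^*(G)}(\bar{x})$ is hit by taking any lift $x$ of $\bar{x}$; hence $|nilp(G)|\geq |nilp(G/Z^*(G))|$.

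The containment $nil_G(x)/Z^*(G)\subseteq nil_{G/Z^*(G)}(xZ^*(G))$ is immediate, since a homomorphic image of a nilpotent group is nilpotent: if $\langle x,y\rangle$ is nilpotent then so is $\langle xZ^*(G), yZ^*(G)\rangle$. For the reverse inclusion, set $H=\langle x,y\rangle$ and suppose $HZ^*(G)/Z^*(G)\cong H/(H\cap Z^*(G))$ is nilpotent. The standard restriction $H\cap Z^*(G)\subseteq Z^*(H)$, obtained from the term-by-term inclusions $Z_k(G)\cap H\subseteq Z_k(H)$ of the upper central series, implies that $H/Z^*(H)$ is a further quotient of $H/(H\cap Z^*(G))$ and hence nilpotent. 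But the hypercenter of $H/Z^*(H)$ is trivial by construction, while a finite nontrivial nilpotent group has nontrivial center; so $H/Z^*(H)$ must be trivial, i.e.\ $H=Z^*(H)$ is hypercentral, and being finite it is nilpotent. This gives $y\in nil_G(x)$, establishing the identity.

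The main technical point is the restriction lemma $H\cap Z^*(G)\subseteq Z^*(H)$, which is where one actually uses the hypercentral structure of $Z^*(G)$ rather than treating it as a black-box normal subgroup. It is a standard exercise verified by induction on $k$, using $[Z_{k+1}(G),G]\subseteq Z_k(G)$: if $h\in H\cap Z_{k+1}(G)$ then $[h,H]\subseteq H\cap Z_k(G)\subseteq Z_k(H)$ by the inductive hypothesis, whence $h\in Z_{k+1}(H)$. Apart from this, the argument is essentially bookkeeping, and the finiteness hypothesis in force throughout the paper removes any subtlety in passing from hypercentrality to nilpotency.
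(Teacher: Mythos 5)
Your proof is correct and takes essentially the same approach as the paper: both rest on the identity $nil_{G/Z^*(G)}(xZ^*(G))=nil_G(x)/Z^*(G)$ and then deduce the inequality by counting. The only difference is that the paper declares this identity ``clear,'' whereas you supply the actual verification of the nontrivial inclusion via the restriction $H\cap Z^*(G)\subseteq Z^*(H)$, which is a sound and welcome filling-in of the omitted details.
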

\begin{proof}
It is clear that for all $x\in G$,
$nil_{\frac{G}{Z^*(G)}}(xZ^*(G))=\frac{nil_G(x)}{Z^*(G)}$. It
follows that if $nil_{\frac{G}{Z^*(G)}}(xZ^*(G))\neq
nil_{\frac{G}{Z^*(G)}}(yZ^*(G))$, then $nil_G(x)\neq nil_G(y)$.
 This completes this proof.
\end{proof}

\section{\textbf{Proofs of Theorem \ref{t1} and Theorem \ref{t2}}}

To prove Theorem \ref{t1} we need the following lemmas.
\begin{lem}{\rm\cite[Lemma 3.7]{z}}\label{lr}
Let $G$ be a group and $H$  a nilpotent subgroup of $G$ in which
$C_G(x)\leq H$ for every $x\in H\backslash\{1\}$. Then
$nil_G(x)=H$ for every $x\in H\backslash\{1\}$.
\end{lem}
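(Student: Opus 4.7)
The plan is to prove the equality $nil_G(x)=H$ for every $x\in H\setminus\{1\}$ by establishing the two inclusions separately. The inclusion $H\subseteq nil_G(x)$ is immediate from the nilpotency of $H$: for any $h\in H$, the subgroup $\langle x,h\rangle$ is contained in $H$, hence is itself nilpotent, so $h\in nil_G(x)$.

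The substantive direction is $nil_G(x)\subseteq H$. Fix $y\in nil_G(x)$ and set $N=\langle x,y\rangle$. By the definition of nilpotentizer, $N$ is nilpotent, and $N\neq 1$ since $x\neq 1$. The structural fact I would exploit is that a nontrivial nilpotent group has a nontrivial center, so there exists $z\in Z(N)$ with $z\neq 1$. The centralizer hypothesis is then applied twice. First, since $z$ is central in $N$, it commutes with $x$, so $z\in C_G(x)\leq H$; in particular $z$ is a \emph{nontrivial} element of $H$. Second, since $z$ also commutes with $y$, we obtain $y\in C_G(z)\leq H$, by the same hypothesis applied now to the element $z\in H\setminus\{1\}$. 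This gives $y\in H$ and finishes the inclusion.

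The main obstacle, if one can call it that, is ensuring the existence of the nontrivial central element $z\in Z(N)$. This is precisely where the nilpotency of $\langle x,y\rangle$ gets used, and it works uniformly in the finite and infinite settings since the upper central series of a nontrivial nilpotent group always begins with a nontrivial term. Notice also that the nilpotency of $H$ itself is only needed for the trivial inclusion; the whole burden of the harder direction is carried by the centralizer condition together with the single fact that nontrivial nilpotent groups have nontrivial centers, used to pass from $x$ through $z$ to $y$.
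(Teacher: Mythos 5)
Your proof is correct and is essentially the standard argument for this statement (the paper itself defers to \cite[Lemma 3.7]{z}, whose proof runs the same way): the easy inclusion $H\subseteq nil_G(x)$ from nilpotency of $H$, and the reverse inclusion by picking a nontrivial element $z$ of the center of the nilpotent group $\langle x,y\rangle$ and applying the centralizer hypothesis first at $x$ to get $z\in H\setminus\{1\}$ and then at $z$ to get $y\in C_G(z)\leq H$. No gaps; the key fact that a nontrivial nilpotent group has nontrivial center is used exactly where it is needed and holds without any finiteness assumption.
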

\begin{prop}{\rm\cite[Proposition 2.1]{z}}\label{ln0}
Let  $G$ be  any group. Then
\begin{enumerate}
\item $Z^{*}(G)\subseteq nil(G) \subseteq R(G)$, where $R(G)$ is
the set of right Engel elements of $G$. \item If $G$ satisfies the
maximal condition on its subgroups or $G$ is finitely generated
solvable group then $Z^{*}(G)=nil(G)=R(G)$.
\end{enumerate}
\end{prop}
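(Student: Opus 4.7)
The plan is to split the argument into the two inclusions of part~(1), each handled by a direct group-theoretic computation, and then appeal to classical Engel theory for the reverse inclusions in part~(2). The three inclusions of part~(1) follow from the definitions; the real substance lies in the equalities of part~(2).

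For the first inclusion $Z^{*}(G)\subseteq nil(G)$, I would fix $x\in Z_n(G)$ (the $n$-th term of the upper central series) and an arbitrary $y\in G$, and show that the subgroup $K=\langle x,y\rangle$ is nilpotent of class at most $n$. The argument is a short induction proving $\gamma_i(K)\subseteq Z_{n-i+1}(G)$ for every $i\geq 2$. The base case $i=2$ uses $[x,y]\in [Z_n(G),G]\subseteq Z_{n-1}(G)$ together with $Z_{n-1}(G)\trianglelefteq G$, so $[K,K]$, being the normal closure of $[x,y]$ in $K\leq G$, lies in $Z_{n-1}(G)$. The inductive step invokes the standard relation $[Z_j(G),G]\subseteq Z_{j-1}(G)$. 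At $i=n+1$ we obtain $\gamma_{n+1}(K)\subseteq Z_0(G)=1$, so $K$ is nilpotent and $x\in nil(G)$. The second inclusion $nil(G)\subseteq R(G)$ is then immediate: if $\langle x,y\rangle$ has nilpotency class $c$, then $\gamma_{c+1}\langle x,y\rangle=1$ yields $[x,_c y]=1$, so $x$ is a right Engel element of $G$.

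For part~(2), the only nontrivial direction is $R(G)\subseteq Z^{*}(G)$. I would obtain this by invoking Baer's theorem (under the maximal condition on subgroups) and Gruenberg's theorem (for finitely generated soluble groups), both of which assert precisely the inclusion $R(G)\subseteq Z^{*}(G)$ in their respective settings. Combined with part~(1), either hypothesis forces $Z^{*}(G)=nil(G)=R(G)$. The main obstacle here is not any calculation but the recognition that part~(2) is not self-contained: it rests on genuinely deep classical results of Baer and Gruenberg, whereas part~(1) is essentially formal manipulation of the upper central series and the definition of nilpotency.
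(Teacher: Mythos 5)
The paper itself offers no argument for this proposition: it is imported verbatim from \cite[Proposition 2.1]{z}, so there is nothing in the text to compare your proof against. On its own terms your argument is the standard one and is essentially correct: the induction $\gamma_i(K)\subseteq Z_{n-i+1}(G)$ for $K=\langle x,y\rangle$ (using that $K'$ is the normal closure in $K$ of the single commutator $[x,y]$) gives $Z_n(G)\subseteq nil(G)$; nilpotency of class $c$ for $\langle x,y\rangle$ gives $[x,_c y]=1$ and hence $nil(G)\subseteq R(G)$; and the reverse inclusion $R(G)\subseteq Z^{*}(G)$ in part (2) is exactly Baer's theorem under the maximal condition and Gruenberg's theorem for finitely generated soluble groups, which is what the cited source relies on as well. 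The one point to tighten: since the proposition is asserted for an arbitrary group, $Z^{*}(G)$ is the terminal term of the possibly transfinite upper central series, whereas your induction only reaches $x\in Z_n(G)$ for finite $n$. For $x\in Z_{\alpha}(G)$ with $\alpha\geq\omega$ one can intersect the upper central series of $G$ with $K=\langle x,y\rangle$ to conclude $x$ lies in the hypercenter of $K$; then $K$ modulo its hypercenter is cyclic, so $K$ is hypercentral, and a finitely generated hypercentral group is nilpotent, which closes the gap. In the finite setting in which the proposition is actually applied in this paper the issue does not arise, since there the upper central series stabilizes after finitely many steps.
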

\begin{lem}\label{lz}
Let $G$ be a group, $H\leq G$ and let $M_1, \cdots, M_n$ be non
trivial proper subgroups of $H$ such that $H=\bigcup_{i=1}^nM_i$
and $M_i\bigcap M_j=nil(H)$ for $i\neq j$. If $nil_H(g)\subseteq
M_i$ for all $g\in M_i\setminus nil(H)$, then
$$\omega(\mathcal{N}_{H})=\sum_{i=1}^n\omega(\mathcal{N}_{M_i}).$$
\end{lem}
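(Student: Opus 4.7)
The plan is to prove the equality by establishing the two inequalities separately. The key preliminary observation is that any nonnilpotent subset of $H$ is automatically disjoint from $nil(H)$: if $x \in nil(H)$, then $\langle x, y\rangle$ is nilpotent for every $y \in H$, so $x$ cannot lie in any set of pairwise nonnilpotent elements of size at least two.

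For the lower bound $\omega(\mathcal{N}_H) \geq \sum_{i=1}^n \omega(\mathcal{N}_{M_i})$, I would choose a maximum nonnilpotent subset $A_i \subseteq M_i$ for each $i$ and verify that $A = \bigcup_{i=1}^n A_i$ is a nonnilpotent subset of $H$ of size $\sum_i |A_i|$. Disjointness of the $A_i$ follows from $M_i \cap M_j = nil(H)$ together with the preliminary observation applied inside each $M_i$. For $a \in A_i$ and $b \in A_j$ with $i \neq j$, the hypothesis gives $nil_H(a) \subseteq M_i$, while $b \notin M_i$ (otherwise $b \in M_i \cap M_j = nil(H)$, contradicting $b \in A_j$); hence $b \notin nil_H(a)$, so $\langle a, b\rangle$ is nonnilpotent. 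Pairs within the same $A_i$ are nonnilpotent by construction, since nilpotency of a subgroup is intrinsic and does not depend on the ambient group.

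For the upper bound, I would take a nonnilpotent subset $B$ of $H$ with $|B| = \omega(\mathcal{N}_H)$ and partition it using the cover $H = \bigcup_i M_i$. Since $B \cap nil(H) = \emptyset$ by the preliminary observation and $M_i \cap M_j = nil(H)$ for $i \neq j$, the sets $B \cap M_i$ are pairwise disjoint with union $B$. Each $B \cap M_i$ is a nonnilpotent subset of $M_i$, so $|B \cap M_i| \leq \omega(\mathcal{N}_{M_i})$, and summing yields $|B| \leq \sum_i \omega(\mathcal{N}_{M_i})$.

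I do not expect a serious obstacle: the argument is essentially a combinatorial bookkeeping, and the one substantive step, verifying cross-block nonnilpotency in the lower bound, is handled directly by the hypothesis $nil_H(g) \subseteq M_i$ for $g \in M_i \setminus nil(H)$.
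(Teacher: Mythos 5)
Your proof is correct and follows essentially the same route as the paper: the upper bound by splitting a maximum clique along the cover $H=\bigcup_{i=1}^n M_i$ (using that clique elements avoid $nil(H)$), and the lower bound by taking the union of maximum cliques inside the $M_i$ and checking cross-pairs via the hypothesis $nil_H(g)\subseteq M_i$. Your direct verification that $b\notin nil_H(a)$ is a slightly cleaner phrasing of the paper's argument by contradiction, but the substance is identical (and both treatments leave the same degenerate case implicit, namely when some $\omega(\mathcal{N}_{M_i})=1$ and the singleton must still be chosen outside $nil(H)$).
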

\begin{proof}
If $N$ is any clique of $\mathcal{N}_{H}$ then $N=\bigcup_{i=1}^n
N_i$ where $N_i\subseteq M_i\setminus nil(H)$ for each
$i\in\{1,\cdots n\}$. It follows that $|N|\leq
\sum_{i=1}^n\omega(\mathcal{N}_{M_i})$. Now let $W_i$ be a maximum
clique of $\mathcal{N}_{M_i}$ for each $i\in\{1,\cdots n\}$. We
claim that $W=\bigcup_{i=1}^nW_i$ is a maximum clique for
$\mathcal{N}_{H}$. Suppose for a contradiction that there exist
two distinct elements $a$ and $b$ in $\bigcup_{i=1}^nW_i$ such
that $\langle a, b\rangle$ is a nilpotent group. Thus there exist
$i\neq j$ such that $a\in M_i$ and $b\in M_j$. Therefore $a\in
nil_H(b)\subseteq M_j$. It follows that $H=nil_H(a)\subseteq M_i$
and so $H=M_i$, a contradiction. Now as
$|W|=\sum_{i=1}^n\omega(\mathcal{N}_{M_i})$, the result follows.
\end{proof}
A set $\mathcal{P}=\{H_1,\ldots, H_n\}$ of subgroups $H_i
~~(i=1,\ldots, n)$ is said to be a partition of $G$ if every
non-identity element $x\in G$ belongs to one and only one subgroup
$H_i\in \mathcal{P}$.\\

 \noindent{\bf Proof of
Theorem \ref{t1}.} The Suzuki group $G$ contains subgroups $F, A,
B$ and $C$ such that $|F|=q^2, |A|=q-1, |B|=q-2r+1$ and
$|C|=q+2r+1$ (see \cite[Chapter XI, Theorems 3.10 and 3.11]{Hu2}).
Also by \cite[pp. 192-193, Theorems 3.10 and 3.11]{Hu2}, the
conjugates of $A, B, C$ and $F$ in $G$ form a partition, say
$\mathcal{P}$,  for $G$, and $A, B, C$ are cyclic and $F$ is a
Sylow $2$-subgroup of $G$ and also for every  $M\in \mathcal{P}$
we have $C_{G}(b)\leq M$ for all $b\in M$.

 Assume that $a\in G\setminus \{1\}$. Since $\mathcal{P}$ is a partition of $G$,  $a\in M$ for
  some $M\in \mathcal{P}$. It follows from Lemma \ref{lr}
  that  $nil_{G}(a)=M$ for all $a\in M$ \rm($\star\star$).
From \cite[Chapter XI, Theorems 3.10 and 3.11]{Hu2} implies that
the number of conjugates of $C$, $B$, $A$ and $F$ in $G$ are
respectively, $k= \frac{q^2(q-1)(q^2+1)}{4(q+2r+1)}$ ,
$n=\frac{q^2(q-1)(q^2+1)}{4(q-2r+1)}$, $s=\frac{q^2(q^2+1)}{2}$
and $t=q^2+1$. Since $G$ is a finite simple group, Proposition
\ref{ln0} follows that $nil(G)=Z^{*}(G)=1$. Now as $\mathcal{P}$
is a partition for $G$ and by ($\star\star$), we implies, by Lemma
\ref{lz}, that $\omega(\mathcal{N}_{G})$ is equal to size of the
set $\mathcal{P}$ (note that, for a non trivial nilpotent group
$H$, we define $\omega(\mathcal{N}_H)=1$). Thus
$\omega(\mathcal{N}_G)=k+n+s+t$.
This completes the proof.\\

Now in view of the proof of Theorem \ref{t1}, one can see that
every nilpotentizer of $Suz(q)$ is a nilpotent subgroup (and hence
$Suz(q)$ is an $\mathcal{N}n$-group, see \cite{Az}). Therefore, by theorem
\ref{t1} and by Part (2) of Proposition \ref{l00}, we give the
following interesting result.

\begin{cor}
Let $G = Suz(q)$ \rm($q = 2^{2m+1}$ and $m > 0$). Then
$$|nilp(G)|=(q^2 + 2)+ \frac{q^2(q^2 + 1)}{2} +
\frac{q^2(q^2 + 1)(q - 1)}{4(q + 2r + 1)}+\frac{q^2(q^2 + 1)(q
-1)}{4(q-2r+1)},$$ where $r=\sqrt{\frac{q}{2}}$.
\end{cor}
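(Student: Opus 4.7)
The plan is to deduce the corollary directly from Theorem~\ref{t1} together with Part~(2) of Proposition~\ref{l00}. The essential observation, already extracted in the proof of Theorem~\ref{t1}, is that for every non-identity $a \in G = Suz(q)$ one has $nil_G(a) = M$, where $M$ is the unique member of the partition $\mathcal{P}$ containing $a$. Each such $M$ is either cyclic (when $M$ is a conjugate of $A$, $B$, or $C$) or a Sylow $2$-subgroup (when $M$ is a conjugate of $F$), hence nilpotent. Thus the hypothesis of Part~(2) of Proposition~\ref{l00} applies.

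Invoking that proposition yields $|nilp(G)| = \omega(\mathcal{N}_G) + 1$, and substituting the value of $\omega(\mathcal{N}_{Suz(q)})$ from Theorem~\ref{t1} turns the summand $(q^2 + 1)$ into $(q^2 + 2)$ while leaving the remaining three summands untouched, which is exactly the displayed formula.

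There is no real obstacle here: the structural enumeration of the Suzuki group's partition and the identification of its nilpotentizers were both accomplished inside the proof of Theorem~\ref{t1}, and the passage from $\omega(\mathcal{N}_G)$ to $|nilp(G)|$ is a one-line application of the proposition. Alternatively, one could count directly: the distinct values of $nil_G(g)$ as $g$ ranges over $G$ are precisely the $|\mathcal{P}|$ subgroups of the partition together with $nil_G(1) = G$, yielding the same total without any further appeal to Proposition~\ref{l00}.
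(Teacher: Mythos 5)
Your proposal is correct and follows exactly the paper's route: observe from the proof of Theorem~\ref{t1} that every nilpotentizer of $Suz(q)$ is a nilpotent subgroup (a member of the partition $\mathcal{P}$), then apply Part~(2) of Proposition~\ref{l00} to get $|nilp(G)|=\omega(\mathcal{N}_G)+1$ and substitute the formula from Theorem~\ref{t1}. The direct count you mention at the end is a valid shortcut but amounts to the same observation.
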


To prove Theorem \ref{t2} we need the following Lemma.
\begin{lem}\label{lm1}
Let $G$ be a finite group. Then: \\
(1)\; For any subgroup $H$ of $G$, $\omega(\mathcal{N}_{H})\leq
\omega(\mathcal{N}_{G})$;\\
(2)\; For any nonabelian factor group $\frac{G}{M}$ of $G$,
$\omega(\mathcal{N}_{\frac{G}{M}})\leq
\omega(\mathcal{N}_{G})$;\\
(3)\; $\omega(\mathcal{N}_{\frac{G}{K}})=\omega(\mathcal{N}_{G})$,
where $K$ is a normal subgroup of $G$ with $K\leq Z^*(G)$.
\end{lem}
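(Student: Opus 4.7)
For part (1), the idea is that $\mathcal{N}_H$ is literally the induced subgraph of $\mathcal{N}_G$ on the vertex set $H$: for $a,b\in H$ the subgroup $\langle a,b\rangle$ coincides whether computed in $H$ or in $G$, so the two graphs agree on edges among vertices of $H$. Consequently, any clique of $\mathcal{N}_H$ is a clique of $\mathcal{N}_G$, and the inequality follows at once.

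For part (2), I would lift a maximum clique. Take $\{a_1M,\ldots,a_nM\}$ of size $n=\omega(\mathcal{N}_{G/M})$ in $\mathcal{N}_{G/M}$. The representatives $a_1,\ldots,a_n$ are pairwise distinct in $G$ since their cosets are distinct, and if $\langle a_i,a_j\rangle$ were nilpotent for some $i\neq j$ then its homomorphic image $\langle a_iM,a_jM\rangle$ in $G/M$ would also be nilpotent, contradicting the clique property downstairs. Hence $\{a_1,\ldots,a_n\}$ is a clique in $\mathcal{N}_G$; the nonabelianness of $G/M$ is used only to ensure the statement is nontrivial.

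For part (3), the direction $\omega(\mathcal{N}_{G/K})\leq\omega(\mathcal{N}_G)$ follows from part~(2) whenever $G/K$ is nonabelian, and is trivial ($\omega(\mathcal{N}_{G/K})=1\leq\omega(\mathcal{N}_G)$) otherwise. For the reverse inequality I would project: let $\{a_1,\ldots,a_n\}$ be a maximum clique of $\mathcal{N}_G$ and consider the cosets $a_iK$. Distinctness of these cosets is immediate from $K\leq Z^*(G)\subseteq nil(G)$ (Proposition~\ref{ln0}): if $a_iK=a_jK$ then $a_i^{-1}a_j\in nil(G)$, forcing $\langle a_i,a_j\rangle=\langle a_i,\,a_i^{-1}a_j\rangle$ to be nilpotent and contradicting the clique property.

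The main obstacle is verifying that the cosets $a_iK$ remain pairwise nonnilpotent in $G/K$. If $\langle a_iK,a_jK\rangle\cong\langle a_i,a_j\rangle/\bigl(\langle a_i,a_j\rangle\cap K\bigr)$ were nilpotent, I would want to deduce that $\langle a_i,a_j\rangle$ itself is nilpotent. The key auxiliary fact is that hypercentrality descends to subgroups, namely $Z^*(G)\cap H\leq Z^*(H)$ for every $H\leq G$, proved by a quick induction on the upper central series (if $x\in Z_{n+1}(G)\cap H$ then $[x,H]\subseteq Z_n(G)\cap H\leq Z_n(H)$ by the induction hypothesis, hence $x\in Z_{n+1}(H)$). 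Applied with $H=\langle a_i,a_j\rangle$, this gives $\langle a_i,a_j\rangle\cap K\leq Z^*(\langle a_i,a_j\rangle)$, and then the standard fact that a nilpotent quotient by a hypercentral normal subgroup is itself nilpotent (comparing $\gamma_c(H)\leq\langle a_i,a_j\rangle\cap K\leq Z_d(H)$ gives $\gamma_{c+d}(H)=1$) forces $\langle a_i,a_j\rangle$ to be nilpotent, delivering the desired contradiction. The same argument applied to every $2$-generated subgroup justifies the dichotomy used above: if $G/K$ is weakly nilpotent then so is $G$.
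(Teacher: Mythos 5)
Your argument is correct and follows essentially the same route as the paper: parts (1) and (2) are the induced-subgraph and clique-lifting observations the paper dismisses as clear, and for part (3) the paper's entire proof is the single remark that nilpotency of $\frac{\langle a,b\rangle}{Z^*(G)\cap\langle a,b\rangle}$ forces nilpotency of $\langle a,b\rangle$, which is exactly the fact you establish in detail via $Z^*(G)\cap H\leq Z^*(H)$ and the $\gamma_{c+d}$ computation. Your additional care about distinctness of the cosets $a_iK$ (using $K\leq nil(G)$) fills a step the paper leaves implicit.
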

\begin{proof}
(1)\;Clearly.\\
(2)\; This is straightforward.\\
(3)\; For proof it is enough to note that if $\frac{\langle a,
b\rangle}{Z^*(G)\cap \langle a, b\rangle }$ is nilpotent, then
$\langle a, b\rangle$ is nilpotent, for all $a, b\in G$.
\end{proof}
\begin{cor}
$$\omega(\mathcal{N}_{PGL(n,q)})=
 \omega(\mathcal{N}_{GL(n,q)}),$$ and $$\omega(\mathcal{N}_{PSL(n,q)})=
 \omega(\mathcal{N}_{SL(n,q)}).$$
\end{cor}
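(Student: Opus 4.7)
The plan is to reduce the corollary to a direct application of part (3) of Lemma \ref{lm1}. By the standard definitions we have $PGL(n,q) = GL(n,q)/Z(GL(n,q))$, where $Z(GL(n,q))$ is the subgroup of scalar matrices, and similarly $PSL(n,q) = SL(n,q)/Z(SL(n,q))$. So in both cases we are factoring out a normal subgroup that sits inside the (ordinary) center of the ambient group.

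Now for any group $G$ one has $Z(G) \leq Z^{*}(G)$, since $Z(G)$ is by definition the first term of the ascending central series whose union is the hypercenter. Consequently the kernels $K_1 = Z(GL(n,q))$ and $K_2 = Z(SL(n,q))$ satisfy $K_1 \leq Z^{*}(GL(n,q))$ and $K_2 \leq Z^{*}(SL(n,q))$. Applying Lemma \ref{lm1}(3) to each of these quotients yields
$$\omega(\mathcal{N}_{PGL(n,q)}) = \omega(\mathcal{N}_{GL(n,q)/K_1}) = \omega(\mathcal{N}_{GL(n,q)}),$$
$$\omega(\mathcal{N}_{PSL(n,q)}) = \omega(\mathcal{N}_{SL(n,q)/K_2}) = \omega(\mathcal{N}_{SL(n,q)}),$$
which is precisely the claim. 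Since everything rests on Lemma \ref{lm1}(3), which is proved earlier, there is no real obstacle here; the only point that requires a line of comment is the containment $Z(G) \leq Z^{*}(G)$, and that is immediate from the construction of the hypercenter.
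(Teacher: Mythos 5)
Your proof is correct and is exactly the argument the paper intends: the corollary is stated immediately after Lemma \ref{lm1} precisely so that part (3) applies with $K$ the group of scalar matrices, using $Z(G)\leq Z^{*}(G)$. No difference in approach.
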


\noindent{\bf Proof of Theorem \ref{t2}.} (1)\; Since
$\omega(\mathcal{N}_{PGL(n,q)})\leq
 \omega(\mathcal{A}_{PGL(n,q)})$ and $PSL(n,q)\cong \frac{ZSL(n,q)}{Z}\leq PGL(n,q)$ where $Z$ is the center of $GL(n,q)$,
 it follows, by part (1) of Lemma \ref{lm1}, that $$\omega(\mathcal{N}_{PSL(2,q)})\leq \omega(\mathcal{N}_{PGL(2,q)})=
 \omega(\mathcal{N}_{GL(2,q)})\leq
 \omega(\mathcal{A}_{PGL(2,q)}\leq
 \omega(\mathcal{A}_{GL(2,q)}).$$ Now the result follows from  Proposition 4.3 of \cite{Ab3} and Proposition 4.2 of
 \cite{zar}.

 (2)\; We have used the following function written with {\sf GAP} \cite{Ga} program to
prove this part of the theorem. The input of the function is a
group $G$ and an element $t\in G$ and the output is $nil_G(t)$. \\\\
{\rm f:= function(G,t)~~~~ local~ r;}\\
{\rm r:=Set(Filtered(G,i$\rightarrow$
IsNilpotent(Group(i,t))=true));}\\{\rm return r; end;}

Let $G=PSL(3,3)$. Therefore it is easy to see that the set of
order elements of $nilp(G)$ is
$$\{6, 13, 16, 27, 32, 162, 192, 5616\}$$ and if $A = \{nil_G(g)
\mid g\in G, |nil_G(g)| = 6\}$, $B = \{nil_G(g) \mid g\in G,
|nil_G(g)| = 16\}$, $C = \{nil_G(g) \mid g\in G, |nil_G(g)| =
13\}$ and $D = \{nil_G(g) \mid g\in G, |nil_G(g)|=27\}$, then $|A|
= 468, ~|B| = 351, |C| = 144$ and $|D| = 52$.  Thus there exist
elements $a_i, b_j, c_k, d_l\in G$ such that $|nil_G(a_i)|=6$ for
$1\leq i\leq 468$, $~~|nil_G(b_j)|=16$ for $1\leq j\leq 351$,
$|nil_G(c_k)|=13$ for $1\leq k\leq 144$ and $|nil_G(d_l)|=27$ for
$1\leq l\leq 52$. Set $X=\{a_1,\ldots, a_{468}, b_1,\ldots,
b_{351}, c_1,\ldots,c_{144}, d_1,\ldots, d_{52}\}$. Also we can
show that, by {\sf GAP} \cite{Ga}, that each element in $A\cup B
\cup C\cup D$ is a nilpotent subgroup of $G$.

Now we claim that $X$ is a nonnilpotent subset of $G$. Let $a,
b\in X$  such that $\langle a,b\rangle$ is nilpotent. Therefore
$a\in nil_G(b)$. Since $nil_G(b)$ is a nilpotent subgroup, it
follows that $nil_G(b)\leq nil_G(a)$. Similarly, $nil_G(a)\leq
nil_G(b)$. Hence $nil_G(a)=nil_G(b)$, which is a contradiction.
Thus $X$ is a nonnilpotent subset of $G$ and so $1015=|X|\leq
\omega(\mathcal{N}_{G})$. This completes the proof.\\

Note that by Theorem 4.1 of \cite{Az}, we get $52\leq
\omega(\mathcal{N}_{GL(3,3)})$. But by Theorem \ref{t2} and as
$\omega(\mathcal{N}_{PSL(n,q)})\leq
\omega(\mathcal{N}_{PGL(n,q)})$, we can obtain that $1015\leq
\omega(\mathcal{N}_{GL(3,3)})=\omega(\mathcal{N}_{PGL(3,3)})$.
Finally, in view of these results, we state the following
conjecture:
\begin{con}\label{con1}
$\omega(\mathcal{N}_{PSL(3,3)})=\omega(\mathcal{N}_{PGL(3,3)})=1015.$
\end{con}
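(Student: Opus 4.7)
Since $\gcd(3,3-1) = 1$, we have $PSL(3,3) = PGL(3,3) = SL(3,3)$ as abstract groups; call this common group $G$. The lower bound $\omega(\mathcal{N}_G) \geq 1015$ is already in Theorem \ref{t2}(2), so the plan is to prove the matching upper bound $\omega(\mathcal{N}_G) \leq 1015$.

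The main tool I would introduce is the general inequality $\omega(\mathcal{N}_H) \leq |\mathcal{M}(H)|$ for any finite group $H$, where $\mathcal{M}(H)$ denotes the set of maximal nilpotent subgroups of $H$. The proof is immediate: given a clique $\{a_1,\ldots,a_t\}$ of $\mathcal{N}_H$, choose a maximal nilpotent subgroup $M_i \ni a_i$ for each $i$; if $M_i = M_j$ with $i \neq j$, then $\langle a_i,a_j \rangle \subseteq M_i$ is nilpotent, contradicting the clique property, so the $M_i$ are pairwise distinct and $t \leq |\mathcal{M}(H)|$.

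The main step is to show $|\mathcal{M}(G)| = 1015$ for $G = PSL(3,3)$ by classifying $\mathcal{M}(G)$ into four families: (i) the $351$ Sylow $2$-subgroups of order $16$, (ii) the $52$ Sylow $3$-subgroups of order $27$, (iii) the $144$ Sylow $13$-subgroups of order $13$, and (iv) the $468$ cyclic subgroups of order $6$. The three Sylow counts follow from the normalizer orders $16$, $108$, $39$; each Sylow $p$-subgroup $P$ is maximal nilpotent because $C_G(P)$ turns out to be a $p$-group, so no nontrivial $p'$-element centralizes $P$ to produce a strictly larger nilpotent overgroup. For (iv), $G$ has a single conjugacy class of order-$6$ elements with $C_G(g) = \langle g\rangle$ (class size $|G|/6 = 936$), yielding $936/\varphi(6) = 468$ cyclic subgroups of order $6$; each is maximal nilpotent because for any nilpotent $N \ni g$ one has $C_N(g) = \langle g\rangle$, making $\langle g\rangle$ a self-centralizing abelian subgroup of $N$, so $|N/\langle g\rangle|$ divides $|\mathrm{Aut}(C_6)| = 2$, and a direct check shows no nilpotent group of order $12$ contains $C_6$ self-centralizingly. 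Summing gives $|\mathcal{M}(G)| = 351+52+144+468 = 1015$.

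The main obstacle I anticipate is verifying \emph{completeness} of the classification, i.e., that no additional maximal nilpotent subgroup of mixed composite order arises. This reduces to using the element orders $\{1,2,3,4,6,8,13\}$ of $G$ (in particular no elements of order $12$, $26$, or $39$) to force every mixed-prime nilpotent subgroup into a $C_6$, and to checking for each element $u$ of order $3$ that the Sylow $2$-subgroup of $C_G(u)$ has order at most $2$ (which rules out a $V_4 \times C_3$ of order $12$). Both facts follow from the centralizer data for $PSL(3,3)$ — centralizers of orders $9$ and $54$ for the two unipotent classes of order $3$, and $48$ for the involution class — and can be double-checked in \textsf{GAP}, in the spirit of the computation already used in the proof of Theorem \ref{t2}(2). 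Once the classification is verified, combining with the established lower bound settles $\omega(\mathcal{N}_{PSL(3,3)}) = 1015$, and the claim for $PGL(3,3)$ follows from $PGL(3,3) = PSL(3,3)$.
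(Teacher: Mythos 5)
Note first that the paper does not prove this statement at all: it is stated as Conjecture 3.6, and the text only establishes the lower bound $1015\leq\omega(\mathcal{N}_{PSL(3,3)})$ via the GAP computation in the proof of Theorem 1.3(2). Your proposal therefore cannot be compared to a proof in the paper; instead it supplies exactly the missing half, the upper bound, and your route is sound. The inequality $\omega(\mathcal{N}_H)\leq|\mathcal{M}(H)|$ (maximal nilpotent subgroups) is a clean strengthening of the paper's Proposition 2.2(1), since distinct maximal nilpotent subgroups can each absorb at most one vertex of a clique. Your classification of $\mathcal{M}(G)$ for $G=PSL(3,3)$ checks out against the ATLAS data and against the paper's own GAP output: the centralizer orders are $48$ for the involution class and $54$, $9$ for the two classes of order $3$ (consistent with the nilpotentizer orders $192=4\cdot48$ and $162=3\cdot54$ reported in the paper), so $4\nmid|C_G(u)|$ for $u$ of order $3$ and $9\nmid|C_G(t)|$ for $t$ an involution; together with the element-order spectrum $\{1,2,3,4,6,8,13\}$ this forces every nilpotent subgroup of mixed order to be cyclic of order $6$, and forces each Sylow subgroup to be maximal nilpotent (its centralizer is a $p$-group because it centralizes an element of order $8$, a regular unipotent, or an order-$13$ element, respectively). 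The counts $351+52+144+468=1015$ then give $\omega(\mathcal{N}_G)\leq1015$, which combined with the paper's lower bound settles the conjecture; the $PGL(3,3)$ claim is immediate from $\gcd(3,2)=1$. The only caveat is that your write-up defers the centralizer and element-order facts to a computation; they are all correct, but a complete proof should pin them down explicitly (they are exactly the data one reads off the character table of $L_3(3)$), and you should state once that every nilpotent subgroup lies in a maximal one, so that the four families really exhaust $\mathcal{M}(G)$. With those verifications written out, your argument upgrades the paper's conjecture to a theorem.
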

\section{\textbf{Proof of Theorem \ref{t3}}}

To prove Theorem \ref{t3} we need the following lemma.
\begin{lem}\label{l11}
 If $G$ is a nonabelian finite simple group with $\omega(\mathcal{N}_{G})\leq72$.
Then $G$ is isomorphic to one of the following groups:\\
 $(1)$ $G\cong PSL(2,5)$;\\
 $(2)$ $G\cong PSL(2,7)$.
\end{lem}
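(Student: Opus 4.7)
The strategy is via the Classification of Finite Simple Groups: eliminate each family except $PSL(2,5)$ and $PSL(2,7)$ by producing a subgroup $H\le G$ with $\omega(\mathcal{N}_H)>72$, invoking Lemma \ref{lm1}(1). The two surviving groups are then verified to have $\omega\le 72$ by direct computation.

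First, I would invoke Proposition 4.2 of \cite{zar}, which gives an explicit closed formula for $\omega(\mathcal{N}_{PSL(2,q)})$ as the sum of the numbers of conjugacy classes of maximal nilpotent subgroups (the Sylow $p$-subgroup and the two cyclic maximal tori). A direct case-by-case evaluation then shows that $\omega(\mathcal{N}_{PSL(2,q)})\le 72$ holds if and only if $q\in\{5,7\}$. As a sanity check on the cutoff, Theorem \ref{t2}(1) gives $\omega(\mathcal{N}_{PSL(2,8)})=\omega(\mathcal{N}_{PGL(2,8)})=64+8+1=73>72$, and the formula is monotonic in $q$ for $q\ge 8$, so every $PSL(2,q)$ with $q\ge 8$ is discarded.

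Second, every remaining simple group $G$ is reduced to an already-handled subgroup. Alternating groups $A_n$ with $n\ge 6$ are eliminated because $A_6\cong PSL(2,9)$. For $PSL(n,q)$ with $n\ge 3$, a natural $SL_2$-type subgroup contains $PSL(2,q)$ for some $q\ge 8$, or else one lands in $PSL(3,3)$, which is excluded by Theorem \ref{t2}(2). The Suzuki family is handled by Theorem \ref{t1}: the smallest case $Suz(8)$ already gives a value enormously larger than $72$, and the formula is increasing in $q$. For the other classical groups (symplectic, unitary, orthogonal) and the exceptional Lie-type groups, a Levi or subsystem subgroup provides a copy of $PSL(2,q)$ with $q\ge 8$, or a copy of a group of the types already eliminated; for the 26 sporadic groups, one consults the Atlas and exhibits, in each case, a subgroup isomorphic to $A_6$, $L_2(8)$, $L_2(11)$ or similar with $\omega>72$.

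Finally, I would verify directly from the formula of Proposition 4.2 of \cite{zar} that $\omega(\mathcal{N}_{PSL(2,5)})$ and $\omega(\mathcal{N}_{PSL(2,7)})$ are both at most $72$, so that these two candidates genuinely survive. The main obstacle is the bookkeeping on the sporadic and small exceptional Lie-type groups: there is no uniform subfield embedding to quote, and one must confirm case by case that a subgroup $H$ with $\omega(\mathcal{N}_H)>72$ really is present. By contrast the generic Lie-type cases (including Suzuki via Theorem \ref{t1}, and $PGL(2,q)$ via Theorem \ref{t2}) are handled cleanly and in bulk by the explicit formulas already established in the paper.
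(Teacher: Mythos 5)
Your overall strategy (reduce to subgroups via Lemma \ref{lm1}(1) and use the explicit formulas for $PSL(2,q)$ and $Suz(q)$) is the natural one, and it is broadly what underlies the paper's argument; the paper itself, however, does not redo the classification sweep but simply cites Corollary 3.4 and the proof of Theorem 4.5 of \cite{Az}, supplementing them with exactly two extra verifications: the Suzuki groups (disposed of by Theorem \ref{t1}) and the group $PSU(3,7)$ (disposed of by a {\sf GAP} count showing it has more than $73$ Sylow $43$-subgroups).

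The genuine gap in your proposal is precisely the step you defer to ``bookkeeping'': the claim that every simple group other than $PSL(2,5)$ and $PSL(2,7)$ contains a copy of some $PSL(2,q)$ with $q\geq 8$, or of a previously eliminated group, is false, and $PSU(3,7)$ --- the very group the paper singles out --- is a counterexample. Its order is $2^7\cdot 3\cdot 7^3\cdot 43$, so it contains no $A_6\cong PSL(2,9)$ (the $3$-part is too small), no $PSL(2,8)$, and no $A_5$ (no factor $5$); its natural ``Levi-type'' subgroup is only $SU(2,7)\cong SL(2,7)$, and in fact its only nonabelian simple proper subgroups are copies of $PSL(2,7)$, which has $\omega(\mathcal{N}_{PSL(2,7)})=57\leq 72$. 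So no subgroup of the kind you prescribe exists, and the group must be eliminated by a direct lower bound on $\omega(\mathcal{N}_{PSU(3,7)})$ itself --- the paper does this by observing that the Sylow $43$-subgroups are more than $73$ in number (being cyclic TI subgroups, nontrivial elements from distinct ones are pairwise nonnilpotent). The same difficulty recurs for other small twisted and linear groups such as $PSU(3,3)$, $PSU(3,4)$ and $PSL(3,5)$, whose proper simple subgroups are all among $A_5$ and $PSL(2,7)$. Until each such case is settled by an ad hoc count of a suitable TI Sylow family (or by citing the corresponding step of \cite{Az}), the elimination is incomplete, so what you have written is an outline with an acknowledged but unresolved hole rather than a proof.
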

\begin{proof}
 The proof follows easily from Corollary 3.4 of \cite{Az} and by an argument similar to the proof of Theorem
4.5 of \cite{Az} (note that, by Theorem \ref{t1}, for the suzuki
group $Suz(2^{2m+1})$ we have
$\omega(\mathcal{N}_{Suz(2^{2m+1})})> 73$  and also it is easy to
see (e.g., by  {\sf GAP} \cite{Ga}), that the number of Sylow
$43$-subgroups of the projective special unitary group of degree
three over the finite fields of order 7, $PSU(3,7)$ is more than
$73$).
\end{proof}

 Recall that a group $G$ is semisimple if $G$ has no non-trivial
normal abelian subgroups. If $G$ is a finite group then we call
the product of all minimal normal nonabelian subgroups of $G$ the
centerless $CR$-radical of $G$; it is a direct product of
nonabelian simple groups \cite[see page 88]{d.j.r}.\\
\begin{lem}\label{l1}
Suppose that $G_i$ is a finite group \rm{(}$i=1,\dots, t$\rm{)}.
Then $$\omega{(\mathcal{N}_{G_1\times G_2\times\dots\times
G_m})}=\prod_{i=1}^m \omega{(\mathcal{N}_{G_i})}.$$
\end{lem}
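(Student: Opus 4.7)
The plan is to prove both inequalities $\omega(\mathcal{N}_H)\ge\prod_{i=1}^m\omega(\mathcal{N}_{G_i})$ and $\omega(\mathcal{N}_H)\le\prod_{i=1}^m\omega(\mathcal{N}_{G_i})$ for $H=G_1\times\cdots\times G_m$. By grouping factors $H=G_1\times(G_2\times\cdots\times G_m)$ and inducting on $m$, it suffices to treat the case $m=2$, so I write $H=G_1\times G_2$. The starting point is the structural fact already used in the proof of Lemma \ref{lj1} (via Proposition 3.1 of \cite{zar}): $\langle(a_1,a_2),(b_1,b_2)\rangle\le H$ is nilpotent iff both coordinate subgroups $\langle a_i,b_i\rangle$ are nilpotent. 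Equivalently, $(a_1,a_2)$ and $(b_1,b_2)$ form a nonnilpotent pair in $H$ iff they are distinct and $\langle a_i,b_i\rangle$ is nonnilpotent for at least one $i$.

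For the lower bound, choose a maximum nonnilpotent subset $A_i\subseteq G_i$ with $|A_i|=\omega(\mathcal{N}_{G_i})$ and exhibit $A_1\times A_2\subseteq H$ as a nonnilpotent subset: any two distinct tuples disagree in some coordinate $j$, where $a_j\ne b_j$ both lie in $A_j$ and hence generate a nonnilpotent subgroup of $G_j$; by the structural fact, the full tuples then generate a nonnilpotent subgroup of $H$.

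For the upper bound, take a maximum nonnilpotent subset $S\subseteq H$ and fiber over the first coordinate, $S=\bigsqcup_{t\in T_1}\{t\}\times R_t$ with $T_1=\pi_1(S)$ and $R_t=\{u\in G_2:(t,u)\in S\}$. Two distinct members of $R_t$ come from elements of $S$ agreeing in the first coordinate, so the second coordinates must generate a nonnilpotent subgroup; hence each $R_t$ is a nonnilpotent subset of $G_2$ and $|R_t|\le\omega(\mathcal{N}_{G_2})$. The sharpening I need is that for any subset $I\subseteq T_1$ whose elements are pairwise nilpotently related in $G_1$ (i.e.\ $\langle t,t'\rangle$ nilpotent for all $t,t'\in I$), the union $\bigsqcup_{t\in I}R_t$ is again a nonnilpotent subset of $G_2$: for $u\in R_t$ and $u'\in R_{t'}$ with $t\ne t'$ in $I$, the fact that $\langle t,t'\rangle$ is nilpotent forces $\langle u,u'\rangle$ to be nonnilpotent. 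Covering $T_1$ by the minimum number of such ``nilpotent cliques''---equivalently, properly coloring $\mathcal{N}_{G_1}|_{T_1}$---then gives $|S|\le\chi(\mathcal{N}_{G_1}|_{T_1})\cdot\omega(\mathcal{N}_{G_2})$.

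The main obstacle is thus to show $\chi(\mathcal{N}_{G_1}|_{T_1})\le\omega(\mathcal{N}_{G_1})$. I would address this using the nilpotentizer coloring $g\mapsto nil_{G_1}(g)$, which is always a proper coloring of $\mathcal{N}_{G_1}$ because two elements sharing a nilpotentizer lie in each other's nilpotentizer and so generate a nilpotent subgroup. This a priori uses $|nilp(G_1)|$ colors rather than $\omega(\mathcal{N}_{G_1})$, so the hard step is a consolidation---in the spirit of the bijection argument in the proof of Proposition \ref{l00}(2)---showing that once $S$ is chosen maximal, surplus color classes meeting $T_1$ can be merged without introducing a monochromatic nonnilpotent edge, reducing the effective palette on $T_1$ to $\omega(\mathcal{N}_{G_1})$ colors and closing the inequality.
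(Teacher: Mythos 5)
Your lower bound is correct and complete: $A_1\times\cdots\times A_m$ is a nonnilpotent subset because any two distinct tuples differ in some coordinate, where the corresponding pair is nonnilpotent, and the coordinatewise criterion $nil_H(x_1,\dots,x_m)=\prod_i nil_{G_i}(x_i)$ (Proposition 3.1 of \cite{zar}, exactly as invoked in Lemma \ref{lj1}) converts this into nonnilpotence of the pair of tuples. This direction is, incidentally, all that the application in Theorem \ref{t3} requires.

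The upper bound, however, contains a genuine gap, and you have located it yourself without closing it. Your fibering argument correctly reduces $|S|\le\omega(\mathcal{N}_{G_1})\,\omega(\mathcal{N}_{G_2})$ to the claim that the subgraph of $\mathcal{N}_{G_1}$ induced on $T_1$ admits a proper coloring with at most $\omega(\mathcal{N}_{G_1})$ colors; but that claim is the entire difficulty, not a technicality. The product graph you are forming is the co-normal product (adjacent iff adjacent in at least one coordinate), whose complement is the strong product of the complements, and for abstract graphs the clique number of a co-normal product can strictly exceed the product of the clique numbers --- this is precisely the super-multiplicativity of the independence number under strong products (the pentagon gives $5>4$). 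So no purely combinatorial manipulation can finish the proof; some group-theoretic input is indispensable. The nilpotentizer coloring you propose is indeed proper, but it uses up to $|nilp(G_1)|-1$ colors, and by Proposition \ref{l00} this number equals $\omega(\mathcal{N}_{G_1})$ only under the hypothesis of part (2) (every proper nilpotentizer a nilpotent subgroup); part (1) is only an inequality. The ``consolidation'' of surplus color classes is asserted rather than proved: merging the classes of two distinct nilpotentizers $N$ and $N'$ requires that no element with nilpotentizer $N$ be nonnilpotently related to an element with nilpotentizer $N'$, and the maximality of $S$ gives no evident handle on this. As written, therefore, your argument establishes only $\omega(\mathcal{N}_H)\ge\prod_i\omega(\mathcal{N}_{G_i})$. (The paper itself disposes of the lemma by citing Proposition 3.1 of \cite{zar} outright, so it offers no guidance on how the upper bound is to be extracted from the nilpotentizer product formula; your analysis makes visible that this extraction is not formal.)
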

\begin{proof}
This follows from Proposition 3.1 of \cite{zar}.
\end{proof}

\noindent{\bf Proof of Theorem \ref{t3}.}
 Assume that  $R$ is the centerless $CR$-Radical of $G$. Then $R$
is a direct product of a finite number, $t$, of finite nonabelian
simple groups, say $R= S_1\times\cdots\times S_t$. Since
$\omega(\mathcal{N}_{G})\leq 72$ and $S_i\leq G$,  it follows that
$\omega(\mathcal{N}_{S_i})\leq 72$ ($\omega{(\mathcal{N}_{R})}\leq
72$). Therefore Lemma \ref{l11}, follows that  $S_i\cong
PSL(2,5)~\mbox{or}~~PSL(2,7)$ for $i\in \{1,\cdots,t\}$.

On the other hand by Lemma \ref{l1}, and also since
$\omega{(\Gamma_R)}\leq 72$, we have $t=1$. Therefore $R \cong
PSL(2,5)~\mbox{or}~~PSL(2,7)$. But we know that $C_G(R) = 1$ and
$G$ is embedded into $Aut(R)$. Hence
 $G\cong PSL(2,5),~~PSL(2,7), S_5$  or $PGL(2,7)$. But for these
 groups we have $\omega(\mathcal{N}_{G})\leq \omega(\mathcal{A}_{G})\leq 57\leq
 72$ and the result follows.

By Theorem \ref{t3}, we have a nice characterization for the
following simple groups.
\begin{cor}\label{co1}
Let $G$ be a finite simple group. Then we have
\begin{quote}
$G\cong A_5$ if and only if $\omega(\mathcal{N}_{G})=21$;\\
$G\cong PSL(2,7)$ if and only if $\omega(\mathcal{N}_{G})=57$.
\end{quote}
\end{cor}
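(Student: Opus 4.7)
The plan is to derive Corollary \ref{co1} directly from Theorem \ref{t3} together with concrete computations of the clique number for the two target groups. For the forward direction, suppose $G$ is a finite simple group with $\omega(\mathcal{N}_G)\in\{21,57\}$. Since $21,57\leq 72$, Theorem \ref{t3} restricts $G$ to the list $\{A_5,\, PSL(2,7),\, S_5,\, PGL(2,7)\}$, and of these only $A_5$ and $PSL(2,7)$ are simple ($S_5$ and $PGL(2,7)$ each contain an index-two normal subgroup). So $G$ must be $A_5$ or $PSL(2,7)$. Discriminating between the two then reduces to computing the exact values of $\omega(\mathcal{N}_{A_5})$ and $\omega(\mathcal{N}_{PSL(2,7)})$.

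For $A_5$ the computation follows the template of the proof of Theorem \ref{t1}. The Sylow subgroups of $A_5$ form a partition of $A_5\setminus\{1\}$, with six cyclic Sylow $5$-subgroups, ten cyclic Sylow $3$-subgroups, and five Klein-four Sylow $2$-subgroups (contributing $24+20+15=59$ nonidentity elements). Each such Sylow subgroup $P$ is nilpotent and self-centralizing in the sense that $C_{A_5}(x)\leq P$ for every nonidentity $x\in P$, so Lemma \ref{lr} yields $nil_{A_5}(x)=P$ for all such $x$. Since $A_5$ is simple, $nil(A_5)=\{1\}$, and Lemma \ref{lz} then applies directly (each $\omega(\mathcal{N}_{P})=1$ by nilpotence of $P$) to give $\omega(\mathcal{N}_{A_5})=6+10+5=21$.

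For $PSL(2,7)$ the upper bound $\omega(\mathcal{N}_{PSL(2,7)})\leq\omega(\mathcal{N}_{PGL(2,7)})=7^2+7+1=57$ is immediate from Lemma \ref{lm1}(1) combined with Theorem \ref{t2}(1) applied to $q=7$. The matching lower bound is the value established in Proposition 4.2 of \cite{zar}, which is already invoked in the proof of Theorem \ref{t2}. Combined, these give $\omega(\mathcal{N}_{PSL(2,7)})=57$, and the two distinct values $21\neq 57$ then separate the two simple candidates, completing both directions.

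The main technical obstacle lies in the $PSL(2,7)$ case: unlike for $A_5$, the Sylow $2$-subgroups of $PSL(2,7)$ are nonabelian dihedral groups of order $8$ that pairwise intersect nontrivially, so the maximal nilpotent subgroups do not partition $PSL(2,7)\setminus\{1\}$ and the clean Lemma \ref{lr}–Lemma \ref{lz} argument available for $A_5$ is not directly available. Consequently the lower bound $57$ must be imported from \cite{zar}, or else obtained by an explicit construction of a clique of size $57$ with one representative chosen from each of the $8$ Sylow $7$-subgroups, the $28$ Sylow $3$-subgroups, and the $21$ Sylow $2$-subgroups, followed by a case analysis (possibly aided by \textsf{GAP}, as in the proof of Theorem \ref{t2}(2)) verifying that no two such representatives generate a nilpotent subgroup.
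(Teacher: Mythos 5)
Your proposal is correct and follows the same route as the paper, which derives the corollary purely from Theorem \ref{t3} together with the known values $\omega(\mathcal{N}_{A_5})=21$ and $\omega(\mathcal{N}_{PSL(2,7)})=57$ (the paper simply cites Theorem \ref{t3} and leaves these values to the references \cite{Az} and \cite{zar}). Your added self-contained computation of $\omega(\mathcal{N}_{A_5})=6+10+5=21$ via the Sylow partition and Lemmas \ref{lr} and \ref{lz}, and your correct observation that this partition argument breaks down for $PSL(2,7)$ so that the value $57$ must be imported, are both sound and only make the argument more explicit than the paper's.
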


\end{document}